\documentclass{cimart}




\title{
    On the Nowicki conjecture for the two-generated free Lie algebra
    }

\author{
    Lucio Centrone, {\c S}ehmus F{\i}nd{\i}k and Manuela da Silva Souza
    }

\authorinfo[
    L. Centrone]{University of Bari Aldo Moro, Bari, Italy
    }{%
    lucio.centrone@uniba.it
    }

\authorinfo[
    {\c S}. F{\i}nd{\i}k]{
    \c{C}ukurova University, Adana, Turkey}{%
    sfindik@cu.edu.tr
    }

\authorinfo[
    M. da S. Souza]{
    Federal University of Bahia, Salvador, Brazil}{%
    manuela.souza@ufba.br
    }

\abstract{%
    Let $K[X_n]=K[x_1,\ldots,x_n]$ be the polynomial algebra in $n$ variables over a field $K$ of characteristic zero.
A locally nilpotent linear derivation $\delta$ of $K[X_n]$ is called Weitzenb\"ock due to his well known result from 1932
stating that the algebra of constants of $\delta$ defined by $\text{\rm ker}(\delta)=K[X_n]^{\delta}$ is finitely generated.
The explicit form of a generating set of $K[X_n,Y_n]^{\delta}$ was conjectured by Nowicki in 1994 in the case $\delta$ was such that $\delta(y_{i})=x_{i}$, $\delta(x_i)=0$, $i=1,\ldots,n$.
Nowicki's conjecture turned out to be true and, recently, has been applied to several relatively free associative algebras.
In this paper, we consider the free Lie algebra $\mathcal{L}(x,y)$ of rank $2$ generated by $x$ and $y$ over $K$ and we assume the Weitzenb\"ock derivation $\delta$ sending $y$ to $x$, and $x$ to zero.
We introduce the idea of pseudodeterminants and we present a characterization of Hall monomials that are constants showing they are not so far from being pseudodeterminants. We also give a complete list of generators of the constants of degree less than 7 which are, of course, pseudodeterminants.
    }

\keywords{
    algebras of constants, Weitzenb\"ock derivations, Nowicki conjecture.
    }

\msc{
    17B01; 17B40, 16S15, 16W25, 13N15, 13A50.
    }

\VOLUME{33}
\YEAR{2025}
\ISSUE{3}
\NUMBER{9}
\DOI{https://doi.org/10.46298/cm.15036}

\begin{document}

\section{Introduction}

Let $K[X_n]$ be the polynomial algebra with generating set $X_n=\{x_1,\ldots,x_n\}$, $n\geq2$ over a field $K$ of characteristic zero. It is the free algebra of rank
$n$ in the variety of unitary commutative algebras over $K$. Let $H$ be a subgroup of the general linear group
$GL_n(K)$. Then a polynomial $p\in K[X_n]$ is $H$-\textit{invariant} if it is preserved under the action of each element of $H$. The vector space
$K[X_n]^H$ of all $H$-invariants is called the algebra of $H$-invariants.

The question whether the algebra $K[X_n]^H$ of invariants
is finitely generated for every subgroup $H$ of $GL_n(K)$ is a special case of the Hilbert's fourteenth problem
suggested by the German mathematician David Hilbert in 1900 at the International Congress of Mathematicians in Paris.
Although the answer to Hilbert's question turned out to be negative in general (see for instance the paper by Nagata \cite{Ngt} (1958)), some remarkable affirmative cases have been handled as well.
Among them is the approach of Weitzenb\"ock \cite{W}, where he considered the locally nilpotent linear derivations $\delta$ of the algebra $K[X_n]$.
Then the kernel $\text{\rm ker}(\delta)=K[X_n]^{\delta}$ of the derivation $\delta$ is an algebra so called the algebra of constants.
He showed that the algebra $K[X_n]^{\delta}$ is finitely generated as an algebra, which is equal to the algebra 
$K[X_n]^{UT_2(K)}$ of invariants of 
the unitriangular group given by ${UT_2(K)}=\{\text{exp}(c\delta)\mid c\in K\}$.
Thus the study of the algebra of constants inherits methods from the classical invariant theory. 
The books by Nowicki \cite{N}, Derksen and Kemper \cite{DK},
and Sturmfels \cite{St} are suggested for the readers interested in numerical aspects of algebras of constants and general invariant theory.

The next question after the approach of Weitzenb\"ock has become the explicit forms of those generators. Nowicki conjectured in his book \cite{N} that the algebra $K[X_n,Y_n]^{\delta}$
is generated by $x_1,\ldots,x_n$, and $x_iy_j-y_ix_j$, $1\leq i<j\leq n$,
assuming that $\delta(y_{i})=x_{i}$, $\delta(x_{i})=0$, $i=1,\ldots,n$.
The Nowicki's conjecture was proved by Khoury \cite{K1,K2}, 
Drensky and Makar-Limanov \cite{DML}, Kuroda \cite{Kuroda},
Bedratyuk \cite{Bed} using different techniques. See also the paper \cite{D2} by Drensky for a generalized version of Nowicki's conjecture in which the author proves, as a consequence of the paper by Kuroda of 2009, that the algebra of constants of $K[X_n,Y_n]^\delta$ is generated by $X_n$ and the “determinants" $u_{i,j}:=f_iy_j-f_jy_i$'s, when the derivation $\delta$ is such that $\delta(x_i)=0$ and $\delta(y_i)=f_i$, $i=1,\ldots,n$, where the $f_i$'s are nonconstant polynomials of $K[X_n]$.

A natural generalization of the problem is to study the generators of the algebra of constants of (relatively) free associative or Lie algebras.
In \cite{DG} Drensky and Gupta studied Weitzenb\"ock derivations acting on the relatively free algebra in a variety $\mathfrak{V}$. They proved that
if  $\mathfrak{V}$ contains $UT_2(K)$, then the algebra of constants is not finitely generated. 
It is also known by a result of Drensky in \cite{D1} that 
if $UT_2(K)$ does not belong to $\mathfrak{V}$, then  the algebra of constants is finitely generated.
One may list the recent works as follows. Let $\delta$ be a Weitzenb\"ock derivation of the free metabelian Lie algebra $F_n$
and of the free metabelian associative algebra $A_n$ of rank $m$.
Dangovski et al. \cite{DDF,DDF1} showed that 
the algebras of constants $F_{n}^{\delta}$ and $A_{n}^{\delta}$ are not finitely generated as an algebra
except for some trivial cases. 
Also Drensky and one of the authors \cite{DF1}
considered the Nowicki conjecture for free metabelian Lie algebras, and they gave explicit forms of the generators.
As a continuation of this approach, two among the authors \cite{CF}
solved the problem for several relatively free algebras. See also the paper \cite{cedufi} for a survey toward this argument.

The goal of this paper goes in this direction but in a free Lie algebra setting. We recall, by the well known dichotomy, a variety of Lie algebras either satisfies the
Engel condition (hence it is nilpotent by \cite{zel1}) or contains the metabelian variety $\mathfrak{A}^2$ consisting of all solvable Lie algebras of class 2 which is defined by the identity $[[x_1, x_2], [x_3, x_4]] = 0$. Since the finitely generated nilpotent Lie algebras are finite dimensional, the problem for the finite generation of the algebras of constants of relatively free nilpotent Lie algebras is solved trivially. In this paper we present the algebra of constants of the free Lie algebra $\mathcal{L}(x,y)$ in two generators. We make use of the definition of pseudodeterminants. The name pseudodeterminants is due to some own algebraic properties and their similarities with the determinant-like generators used by Makar-Limanov and Drensky in their proof of Nowicki's conjecture for the free commutative algebra. We give a characterization of the monomials which are constants. It turns out they can be monomials in the algebra generated by the pseudodeterminants or they are “very close" to being pseudodeterminants. We finally give some examples of an explicit set of generators for constants of small degree (up to degree 7) and based on these experimental results, we conjecture a stronger proposition:

\[\textit{The algebra of constants of the free Lie algebra of rank 2 is generated by}\]\[\textit{$x$ and the pseudodeterminants that are constants.} \]

\section{Preliminaries}

Let us fix some notations and some basic facts in our Lie setting.  

\begin{definition}A Lie algebra $L$ is a vector space over the field $K$ with a $K$-bilinear map $[\cdot,\cdot]:L \times L \rightarrow L$ such that \[\text{$[a,a]=0$ and $[[a,b],c]+[[c,a],b]+[[b,c],a]=0$ }\] for every $a,b,c\in L$.
\end{definition}

The second of the relations above is called \textit{Jacobi's identity}. For the sake of simplicity of notations we shall omit brackets in case of left-normed products, that is, we define 
$[a_1, a_2, \ldots, a_{n-1}, a_n] = [[[\ldots [a_1, a_2], \ldots], a_{n-1}], a_n]$
for all $a_1$, $\ldots$, $a_n \in  L$. An immediate consequence of Jacobi's identity is the relation
\begin{equation}\label{identidade_importante}
[a, b, c] = [a, c, b] - [a, [c, b]]
\end{equation}
for all $a$, $b$, $c \in L$. In particular $[a, b, a, b] = [a, b, b, a]$. We shall denote by $L'$ the Lie subalgebra $[L,L]$ of $L$, which is called the \textit{commutator ideal} of $L$.

As well as for the associative case, in the Lie setting we can construct a free object, called the free Lie algebra. If $X$ is a set of indeterminates we are allowed to consider $\mathcal{L}(X)$ the free Lie algebra over the field $K$ generated by the set $X$. In the sequel we shall use linear basis of the free Lie algebra in order to perform our computations. Indeed linear basis of the free Lie algebra have been studied by several authors using different techniques. We shall use one of them in particular which construction will be presented below.

Suppose $X$ is linearly ordered, then using an induction on the degree of Lie monomials, we construct \textit{basic words} starting from the ``smallest'' basic words. Each $x_i\in X$ is a basic word of degree 1. Then let $n>1$ and suppose that for every $d<n$ all basic words of degree $d$ are defined and ordered. Suppose also that provided $w,u$ and $v$ are basic words of degree less than $n$, then $w=[u,v]$ implies $w>v$. Additionally the monomial $q$ of degree $n$ is basic if, whenever $q=[q_1,q_2]$, then $q_1,q_2$ are basic words and $q_1>q_2$ and the decomposition $q=[[q',q''],q_2]$ implies $q''\leq q_2$. Any monomial not respecting the previous axioms is to be considered not basic. We have the next result. 

\begin{theorem} [\hspace{-.01cm}\cite{hal1}]
The basic words in $X$ form a basis of the free Lie algebra $\mathcal{L}(X)$.
\end{theorem}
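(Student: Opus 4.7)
The plan is to prove the two directions separately: that the basic words span $\mathcal{L}(X)$, and that they are linearly independent.

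For spanning, I would proceed by induction on the degree $n$. The base case $n=1$ is immediate, since every generator $x_i\in X$ is a basic word by definition. For $n>1$, a Lie monomial has the form $[u,v]$ where $u,v$ have degrees smaller than $n$, so by induction $u$ and $v$ are linear combinations of basic words. It thus suffices to show that $[u,v]$ is a linear combination of basic words whenever $u$ and $v$ are themselves basic. There are three cases to rule out: (i) if $u=v$, then $[u,v]=0$; (ii) if $u<v$, then $[u,v]=-[v,u]$ and $v>u$; (iii) the remaining obstruction is that $u>v$ but $u=[u',u'']$ with $u''>v$. In case (iii), applying Jacobi in the form (\ref{identidade_importante}),
\[
[[u',u''],v]=[[u',v],u'']+[u',[u'',v]],
\]
produces two brackets where one can check that, with respect to the lexicographic-on-degree ordering combined with a suitable complexity measure (for instance, the multiset of degrees of the factors together with the position of the first violation), the terms are strictly simpler. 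A careful straightening induction, performed along the well-ordering given by degree and this complexity measure, shows the process terminates with a linear combination of basic words.

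For linear independence, I would appeal to an external dimension count. The cleanest route is to embed $\mathcal{L}(X)$ into the free associative algebra $K\langle X\rangle$ (equipped with the commutator bracket) via the universal property, and to observe that the homogeneous component of multidegree $\alpha$ inside the Lie subalgebra generated by $X$ has dimension prescribed by Witt's formula,
\[
\dim\mathcal{L}(X)_n=\frac{1}{n}\sum_{d\mid n}\mu(n/d)\,|X|^{d},
\]
when $X$ is finite (and analogously in the multigraded refinement). One then counts the number of basic words of multidegree $\alpha$ by the classical Hall bijection with Lyndon-type necklaces, obtaining the same number. Since the basic words already span and their cardinality matches the dimension of the target space, they must be linearly independent.

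The main obstacle is the linear independence portion: the combinatorial straightening from Jacobi and antisymmetry is purely formal, but matching the count of basic words to the dimension of $\mathcal{L}(X)_n$ requires either the PBW theorem, the Witt embedding of $\mathcal{L}(X)$ into $K\langle X\rangle$, or the construction of an explicit faithful representation on the formal span of basic words. Any of these inputs is sufficient, but each requires a nontrivial external result; once available, the rest of the argument is bookkeeping via the Jacobi identity and the ordering that defines basic words.
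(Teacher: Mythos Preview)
The paper does not supply its own proof of this theorem: it is stated with a citation to Hall \cite{hal1} and used as a black box, so there is nothing in the paper to compare your attempt against.

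As for the proposal itself, the overall shape is the standard one and is broadly correct, but two points deserve care if you want a complete argument. First, in the spanning step, case (iii) is the heart of Hall's original proof, and termination of the Jacobi rewriting is genuinely delicate: one must specify a well-founded measure that strictly decreases on \emph{both} terms produced by $[[u',u''],v]=[[u',v],u'']+[u',[u'',v]]$, and ``multiset of degrees together with position of first violation'' is not quite enough as stated---Hall's argument uses a double induction on the total degree and on the degree of the right factor $v$, and one must re-expand $[u',v]$ and $[u'',v]$ into basic words before continuing, invoking the induction hypothesis at that inner step. Second, for linear independence, your plan to match the count of basic words against Witt's formula is valid, but the phrase ``classical Hall bijection with Lyndon-type necklaces'' conflates two different constructions: Hall words and Lyndon words are distinct Hall sets, and the enumeration of Hall basic words is not via Lyndon necklaces but via the generating-function identity $\prod_{w}(1-t^{\deg w})^{-1}=(1-|X|t)^{-1}$ coming from PBW, from which the Witt count follows by M\"obius inversion. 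Either PBW or the explicit faithful representation on the span of basic words (as in Hall's or Bourbaki's treatment) is indeed the needed external input, as you note.
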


Instead of basic words we shall use the expression \textit{Hall basis}. We will denote by $\mathcal{B}$ the Hall basis of the free Lie algebra $\mathcal{L}(x, y)$. The order used here is the deg-lexicographic one: $f \leq g$ if, and only if, $\mbox{deg}(f) < \mbox{deg}(g)$ or $\mbox{deg}(f) = \mbox{deg}(g)$ and $f \leq_{\mbox{lex}} g$ where $\leq_{\mbox{lex}}$ is the lexicographic order and $x < y$, where we compare lexicographically the words $f$ and $g$ after deleting the brackets.

The first basis of a free Lie algebra was found by Hall in \cite{hal1} whereas Shirshov in \cite{shi2} and Lyndon in \cite{lyn1} constructed the basis of a free Lie algebra consisting of the so called \textit{Lyndon-Shirshov words} (see for example Bahturin's book \cite{Bah}). In a former work \cite{shi3} Shirshov was able to develop the composition method for Lie
algebras which was refined and rewritten in a modern language setting by Bokut in \cite{bok1}. The smart contribution of Shirshov is highlighted in \cite{shi2} where he suggested a path for choosing bases for free Lie algebras generalizing those of Hall's basis and of Lyndon-Shirshov's basis. Other  examples  of  bases  for  free  Lie  algebras  were  found  by  Bokut  in \cite{bok2}, by Reutenauer in \cite{reu1}, by Blessenohl and Laue in \cite{bll1}, by Bryant, Kovacs and St\"{o}hr in \cite{bks1}, by Guilfoyle and St\"{o}hr in \cite{gus1} and by Chibrikov in \cite{chi1} in which the author shows a right normed monomial basis for the free Lie algebra. For more details on the theory of free Lie algebras we address the reader to the book of Reutenauer \cite{reuten} or the paper \cite{reu1} by the same author.

We will also use a well celebrated result by Shirshov \cite{shi4} and Witt \cite{wit1} which separately obtained the same result.

\begin{theorem}\label{shirshovwitt}
Every Lie subalgebra of a free Lie algebra is free. Moreover, every finite dimensional Lie subalgebra of a free Lie algebra has dimension 1.
\end{theorem}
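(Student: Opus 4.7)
The plan is to embed $\mathcal{L}(X)$ into its universal enveloping algebra, which by the Poincar\'e-Birkhoff-Witt theorem is the free associative algebra $K\langle X\rangle$, and to recall that $\mathcal{L}(X)$ coincides with the space $\mathrm{Prim}(K\langle X\rangle)$ of primitive elements under the standard cocommutative Hopf structure $\Delta(x)=x\otimes 1+1\otimes x$. Given a Lie subalgebra $M\subseteq\mathcal{L}(X)$, I would first consider the associative subalgebra $A\subseteq K\langle X\rangle$ generated by $M$: by PBW applied to $M$ this is canonically isomorphic to $U(M)$, and since $A$ is generated by primitive elements it is a sub-Hopf algebra with $\mathrm{Prim}(A)=M$.

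The heart of the argument is to prove that $A$ is a \emph{free} associative algebra. For this I would use a Nielsen-Schreier-type construction: exploiting the Hall basis $\mathcal{B}$ ordered by deg-lex, one builds by induction on degree a Schreier system of coset representatives exhibiting $K\langle X\rangle$ as a free right $A$-module, and then invokes Cohn's criterion for subalgebras of free associative algebras to conclude that $A$ is free associative on some set $Y$ of homogeneous generators that may be taken to lie in $M$.

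Having $A$ free associative, I would combine this with the induced Hopf structure: by Milnor-Moore (characteristic zero, connected graded cocommutative) one has $A\cong U(\mathrm{Prim}(A))=U(M)$, and separately $A\cong K\langle Y\rangle = U(\mathcal{L}(Y))$. The uniqueness of the Lie algebra of primitives in a connected cocommutative Hopf algebra then forces $M\cong\mathcal{L}(Y)$, which proves that $M$ is free as a Lie algebra. For the second assertion, assume $M$ is finite dimensional and nonzero, so $M\cong\mathcal{L}(Y)$ with $Y$ finite and nonempty. If $|Y|\geq 2$, fixing distinct $y_1,y_2\in Y$, the left-normed monomials $[y_1,\underbrace{y_2,\ldots,y_2}_{k}]$, $k\geq 0$, are Hall basic words of $\mathcal{L}(Y)$ of mutually distinct degrees, hence linearly independent, contradicting $\dim M<\infty$. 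Thus $|Y|=1$ and $\dim M=1$.

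The principal obstacle is the Schreier-type step producing the free right $A$-module structure on $K\langle X\rangle$: executing it rigorously demands either Shirshov's composition lemma (a Gr\"obner-Shirshov basis argument applied to the two-sided ideal associated to $M$) or Witt's explicit transversal construction, both of which are combinatorially delicate. Once in hand, the remaining Hopf-algebraic identifications and the degree-counting argument for the second assertion are essentially formal.
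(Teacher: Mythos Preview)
The paper does not prove Theorem~\ref{shirshovwitt}. It is stated as a ``well celebrated result'' attributed to Shirshov \cite{shi4} and Witt \cite{wit1}, and is then used as a black box (notably in the proof of Proposition~\ref{primality}). There is therefore no ``paper's own proof'' to compare your proposal against.

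On the merits of your outline: the Hopf-algebra route you describe is a recognised characteristic-zero approach to Shirshov--Witt, and the second assertion is handled correctly once the first is in hand. The reductions $A\cong U(M)$ (injectivity of $U(M)\to U(\mathcal{L}(X))$ via PBW) and $\mathrm{Prim}(U(M))=M$ are standard. One caution, however: your step~5 leans on ``Cohn's criterion for subalgebras of free associative algebras'', but subalgebras of $K\langle X\rangle$ are \emph{not} free in general (e.g.\ the subalgebra generated by $x,xy,xy^2,\dots$), so merely being a subalgebra is insufficient. What actually makes $A$ free is the extra structure --- $A$ is a graded connected sub-Hopf algebra generated by primitives --- and it is this, rather than a bare subalgebra criterion, that one must exploit (via a Schreier transversal or, equivalently, by showing $K\langle X\rangle$ is free as a right $A$-module using the Hopf structure). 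You flag this step as the principal obstacle, which is accurate; just be aware that the associative Nielsen--Schreier analogue fails without the coalgebra input, so the phrasing should make that dependence explicit.
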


Now we will give a look toward the notion of derivation for a given algebra. 

\begin{definition}
If $A$ is any algebra (associative or not) over a field $K$, a derivation of $A$ is a $K$-linear map $\delta:A\rightarrow A$ so that $\delta(ab)=\delta(a)b+a\delta(b)$ for every $a,b\in A$.
\end{definition}

Of course, every derivation of a free algebra is completely determined by the value it takes on its generators. Moreover we shall denote by 
\[A^\delta:=\{a\in A|\delta(a)=0\}\]
the algebra of constants of $A$.

As mentioned before, throughout the paper we deal with one particular locally nilpotent derivation of $\mathcal{L}(x,y)$. We recall a locally nilpotent derivation of an algebra $A$ is a derivation $\delta$ such that $\delta^n(a)=0$ for every $a\in A$ with $n$ depending on $a$. Here we shall study the behaviour of $\mathcal{L}(x,y)^\delta$, where $\delta$ is the derivation of $\mathcal{L}(x,y)$ sending $y\mapsto x$ and $x\mapsto0$. Notice that $\delta$ is a locally nilpotent derivation of $\mathcal{L}(x,y)$.

\section{Pseudodeterminants}

As already mentioned in the introduction, the algebra of constants of $\mathcal{L}(x,y)$ is not finitely generated. In what follows we shall construct an ad hoc algebra for monomials (Hall monomials) that are constants starting from the definition of pseudodeterminants. Every field here is assumed to be of characteristic 0.

We recall the first formulation of Nowicki's conjecture stated the algebra of constants $K[X_n,Y_n]^\delta$, where $X_n$ and $Y_n$ are two finite sets of $n$ commutative variables, is generated by the variables $X_n$ and the
\[
u_{ij}=\left |\begin{array}{cc}
x_i & x_j\\
y_i & y_j	
\end{array}\right|.
\]

In \cite{DML} Drensky and Makar-Limanov found a uniformly looking explicit set of defining relations of the algebra of constants $K[X_n,Y_n]^\delta$ which corresponds to the reduced Gr\"obner basis
of the related ideal of $K[X_n,u_{ij} | 1\leq i< j\leq n]$. The basic idea of the paper is the construction of objects which are similar to the \text{\it determinants} $u_{ij}$.

From now on we shall denote $L:=\mathcal{L}(x,y)$ and we shall construct some peculiar polynomials that will be useful in figuring out the constants of $L$.
We consider the following elements of $L$, \[U^{(m,k)}_{A,B}:=
\left|
\begin{array}{cc}
\delta^m(A) & \delta^{k}(A)\\
\delta^m(B) & \delta^{k}(B)	
\end{array}
\right|
=[\delta^m(A),\delta^{k}(B)]-[\delta^{k}(A),\delta^m(B)],\] where $A$ and $B$ are Hall monomials.

\begin{definition}
We shall call $U^{(m,k)}_{A,B}$ \text{\it pseudodeterminant} of degree $(m,k)$ in the Hall monomials $A$ and $B$.
\end{definition} 

First of all, notice the following facts.

\begin{remark}\label{good}
Notice that
\[
\delta(U^{(m,k-1)}_{A,B})=U^{(m,k)}_{A,B}+U^{(m+1,k-1)}_{A,B}
\]
and in particular
$\delta(U^{(m,k-1)}_{A,B})=U^{(m,k)}_{A,B}$ if $\delta^{m+1}(A)=\delta^{m+1}(B)=0$. Moreover, we have $U^{(m,k)}_{A,B}=-U^{(k,m)}_{B,A}$, so we are allowed to consider $m\geq k$.

\end{remark}

\begin{remark}\label{import}
Of course we have $U^{(k,0)}_{A, A} = 2[\delta^k(A), A]$. In particular  if $A$ and $B$ are Hall monomials such that $\delta^k(A)=B$,
then $U^{(k,0)}_{A,A}=2[B,A]$. 
\end{remark}

The pseudodeterminants of degree $(k,0)$ describe completely a special class of polynomials as we can see below.

\begin{proposition}\label{lemma3.4}
Let $f=[p,\delta^k(p)]$, where $p\in L$, then $f$ is a linear combination of pseudodeterminants of degree $(k,0)$.
\end{proposition}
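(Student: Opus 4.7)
The claim reduces to a straightforward linear algebra expansion once we write $p$ in the Hall basis. The plan is as follows.

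First, I would expand $p$ in the Hall basis $\mathcal{B}$, writing $p = \sum_{i} \alpha_i A_i$ with $\alpha_i \in K$ and $A_i \in \mathcal{B}$. By $K$-linearity of $\delta^k$ we then have $\delta^k(p) = \sum_j \alpha_j \delta^k(A_j)$, and by bilinearity of the bracket,
\[
f = [p,\delta^k(p)] = \sum_{i,j} \alpha_i \alpha_j [A_i, \delta^k(A_j)].
\]

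Next, I would split the double sum into diagonal ($i = j$) and off-diagonal ($i \neq j$) contributions, and recognize each piece as a pseudodeterminant. For the diagonal part, antisymmetry of the bracket gives
\[
[A_i,\delta^k(A_i)] = -[\delta^k(A_i),A_i] = -\tfrac{1}{2}\, U^{(k,0)}_{A_i,A_i},
\]
using Remark \ref{import}. For the off-diagonal part, I would group the pairs $(i,j)$ and $(j,i)$ together, and observe that
\[
[A_i,\delta^k(A_j)] + [A_j,\delta^k(A_i)] = -\bigl([\delta^k(A_i),A_j] - [A_i,\delta^k(A_j)]\bigr) = -\, U^{(k,0)}_{A_i,A_j},
\]
directly from the definition $U^{(k,0)}_{A,B} = [\delta^k(A),B] - [A,\delta^k(B)]$.

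Putting the two pieces together yields
\[
f \;=\; -\sum_{i} \tfrac{\alpha_i^2}{2}\, U^{(k,0)}_{A_i,A_i} \;-\; \sum_{i<j} \alpha_i \alpha_j\, U^{(k,0)}_{A_i,A_j},
\]
which exhibits $f$ as a $K$-linear combination of pseudodeterminants of degree $(k,0)$ in Hall monomials. There is no real obstacle; the only small care needed is with the sign conventions in the definition of $U^{(k,0)}_{A,B}$ and in grouping the cross terms symmetrically so that the sum over ordered pairs collapses to a sum over unordered pairs with the correct factor.
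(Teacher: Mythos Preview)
Your proof is correct and essentially identical to the paper's own argument: both expand $p$ in the Hall basis, bilinearly expand the bracket, and then symmetrize the off-diagonal terms $(i,j)$ with $(j,i)$ to recognize $-U^{(k,0)}_{A_i,A_j}$, while the diagonal terms give $-\tfrac{1}{2}U^{(k,0)}_{A_i,A_i}$. The final formula you obtain matches the paper's exactly.
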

\proof
Let $p=\sum\alpha_iM_i$, where the $M_i$'s are Hall monomials and $\alpha_i \in K$. Then \begin{equation}\label{decomp}
    f=\left[\sum\alpha_iM_i,\sum\alpha_j\delta^k(M_j)\right]=\sum_i\sum_j\alpha_i\alpha_j[M_i,\delta^k(M_j)].
\end{equation} Notice that in Equation (\ref{decomp}), for any fixed couple $i,j$, we have the summand \[\alpha_i\alpha_j([M_i,\delta^k(M_j)]+[M_j,\delta^k(M_i)])=\alpha_i\alpha_j([M_i,\delta^k(M_j)]-[\delta^k(M_i),M_j])=-\alpha_i\alpha_jU^{(k,0)}_{M_j,M_i},\] and if $i=j$, then \[[M_i,\delta^k(M_i)]=-\frac{1}{2}U_{M_i,M_i}^{(k,0)}.\] Hence \[f=-\sum_{i<j}\alpha_i\alpha_jU^{(k,0)}_{M_i,M_j}-\frac{1}{2}\sum_i\alpha_i^2U_{M_i,M_i}^{(k,0)}\] and we are done.
\endproof

\section{On constant monomials}


We investigate on the structure of Hall monomials which are constants. We believe this will share light on the structure of constants in general.

The next easy and well known result will be of help in our work. We would like to give a simple proof of it in order to highlight the strict relations between the Lie algebra structure of $L$ and its elements. In fact it is a direct consequence of the Shirshov-Witt Theorem.

\begin{proposition}\label{primality}
Let $f,g\in L$ such that $[f,g]=0$, then $f=\alpha g$ for some $\alpha \in K$.
\end{proposition}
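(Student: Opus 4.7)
The plan is to deduce the statement directly from the Shirshov--Witt Theorem (Theorem \ref{shirshovwitt}), exactly as the excerpt hints. The strategy is to produce a finite dimensional Lie subalgebra of $L$ that contains both $f$ and $g$, and then invoke the second half of that theorem to force its dimension to be at most $1$.

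Concretely, I would first dispose of the trivial case $g=0$: either $f=0$ as well (and then any $\alpha$ works), or else we reverse the roles of $f$ and $g$ before starting. Assuming $g \neq 0$, consider the Lie subalgebra $S \subseteq L$ generated by $f$ and $g$. The key observation is that the hypothesis $[f,g]=0$ forces every iterated Lie bracket of $f$ and $g$ of length at least two to vanish: antisymmetry handles the brackets $[f,f]=[g,g]=0$ and $[g,f]=-[f,g]=0$, while Jacobi together with induction on length reduces every longer nested bracket to an expression containing an inner $[f,g]$. Therefore $S$ coincides with the $K$-linear span of $\{f,g\}$ and, in particular, is finite dimensional of dimension at most $2$.

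Now Theorem \ref{shirshovwitt} says that every finite dimensional Lie subalgebra of a free Lie algebra has dimension $1$, hence $\dim_K S \leq 1$. This means that $f$ and $g$ are linearly dependent over $K$; since $g \neq 0$, we conclude that $f = \alpha g$ for some $\alpha \in K$, as required.

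There is essentially no substantive obstacle: once the framework is in place, the proof is a one-line invocation of Shirshov--Witt. The only mildly technical point worth writing out carefully is the claim that the Lie subalgebra generated by two commuting elements equals their $K$-linear span, and even this follows immediately from $[f,g]=0$ together with bilinearity, antisymmetry and Jacobi.
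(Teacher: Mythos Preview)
Your proposal is correct and follows essentially the same route as the paper's own proof: both assume $g\neq 0$, pass to the Lie subalgebra $S$ generated by $f$ and $g$, observe that $[f,g]=0$ makes $S$ abelian and hence equal to the linear span of $\{f,g\}$, and then invoke the second part of Theorem~\ref{shirshovwitt} to force $\dim_K S\leq 1$. Your write-up is in fact a touch more explicit than the paper's in justifying why all iterated brackets vanish, but the argument is the same.
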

\begin{proof}
Let us suppose $g\neq0$ and do consider $S$ the Lie algebra generated by $f$ and $g$. By Theorem \ref{shirshovwitt} $S$ is free and all its products are 0. Hence the dimension of $S$ as a vector space is less than or equal to 2. This means $S$ is a finite dimensional free Lie algebra. By the second part of Theorem \ref{shirshovwitt} and the fact that $g\neq0$ we get the dimension of $S$ is 1 and we are done.
\end{proof}

The next is a consequence of Proposition \ref{primality} and is of independent interest.

\begin{corollary} If $[g, x] + [\delta(g), y] = 0$ and $g\notin span_K\{x\}$, then $g = \alpha y+\beta x$ for some $\alpha,\beta \in K$ and $\alpha\neq 0$.
\end{corollary}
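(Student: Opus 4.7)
The first move is to recognize the hypothesis in disguise. By the Leibniz rule,
\[
\delta([g,y]) = [\delta(g),y] + [g,\delta(y)] = [\delta(g),y] + [g,x],
\]
so the given equation is precisely $\delta([g,y])=0$; that is, $[g,y]$ is itself a constant. The problem therefore reduces to understanding which $g\in L$ satisfy $[g,y]\in L^{\delta}$.

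Since $\delta$ preserves the natural degree grading of $L$ (both $\delta(y)=x$ and $\delta(x)=0$ are degree-preserving), the equation $\delta([g,y])=0$ splits along the homogeneous components of $g$, and it suffices to treat $g$ homogeneous of some degree $d\geq 1$. A short induction on $k$ -- equivalently, the generalized Leibniz rule $\delta^{k}(uv)=\sum_{i+j=k}\binom{k}{i}\delta^{i}(u)\delta^{j}(v)$ specialized to $v=y$, using $\delta^{j}(y)=0$ for $j\geq 2$ -- yields
\[
\delta^{k}([g,y]) = [\delta^{k}(g),y] + k[\delta^{k-1}(g),x], \quad k\geq 1.
\]
Combined with $\delta([g,y])=0$, which forces $\delta^{k}([g,y])=0$ for every $k\geq 1$, this produces the family of identities
\[
[\delta^{k}(g),y] + k[\delta^{k-1}(g),x] = 0, \quad k\geq 1.
\]

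The decisive step now uses local nilpotence of $\delta$: choose the largest $n\geq 0$ with $\delta^{n}(g)\neq 0$. Substituting $k=n+1$ into the identity kills the first term and leaves $[\delta^{n}(g),x]=0$. By Proposition \ref{primality}, $\delta^{n}(g)=\alpha x$ for some nonzero $\alpha\in K$. But $\delta$ preserves degree, so $\delta^{n}(g)$ lies in the degree-$d$ part of $L$; comparing with the degree-$1$ element $\alpha x$ forces $d=1$. Consequently every homogeneous component of $g$ of degree $\geq 2$ must vanish, so $g\in\text{span}_{K}\{x,y\}$. Writing $g=\alpha y+\beta x$, the hypothesis $g\notin\text{span}_{K}\{x\}$ gives $\alpha\neq 0$, which is exactly the required form.

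The main obstacle is really only the opening reformulation $\delta([g,y])=0$; once this observation is in place the argument is purely structural, relying on the grading of $L$, the local nilpotence of $\delta$, and the Shirshov–Witt consequence already packaged in Proposition \ref{primality}. No explicit Hall-basis computation is needed.
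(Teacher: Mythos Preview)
Your proof is correct and follows essentially the same route as the paper: reduce to homogeneous $g$, iterate the equation to obtain $k[\delta^{k-1}(g),x]+[\delta^{k}(g),y]=0$ for all $k\geq 1$, pass to the top nonvanishing $\delta$-iterate, invoke Proposition~\ref{primality} to get $\delta^{n}(g)=\alpha x$, and conclude by comparing degrees. Your opening reformulation $[g,x]+[\delta(g),y]=\delta([g,y])$ and your direct degree argument at the end are a bit tidier than the paper's case split on $n$, but the substance is identical.
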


\begin{proof}  It is sufficient to prove the statement for homogeneous polynomials; suppose $g$ is a homogeneous polynomial. Because $\delta$ is locally nilpotent and $g\notin  span_K\{x\}$, there exists an integer $n \geq 2$ such that $\delta^n(g) = 0$ and $\delta^{n-1}(g) \neq  0$. Since $[g, x] + [\delta(g), y] = 0$, by induction we obtain
\[
n[\delta^{n-1}(g), x] + [\delta^n(g), y] = 0.
\]
Therefore, $n[\delta^{n-1}(g), x] = 0$ and consequently $\delta^{n-1}(g) = \alpha x$ with $\alpha \neq 0$ by Proposition \ref{primality}.
Notice that the non-zero elements $[g, x]$ and $[\delta(g), y]$ are of the same degree as well as $g$ and $\delta(g)$.

If $n=2$, since $g$ is homogeneous, we have $g = \alpha y+\beta x$, for $\alpha,\beta\in K$ and $\alpha\neq0$.
If $n\geq3$, then there exists an element $w\in L$ such that $\delta(w)=\alpha y$ that is impossible.
\end{proof}

Let us denote by $U^{(k)}$ the Lie subalgebra of $L^\delta$ generated by the polynomials that are constants of type $M=[f,g]$ and such that there exists $r\geq k\geq 1$ in order that $\delta^{r-1}(f)=\alpha\delta^{k-1}(g)\neq 0$, by the constants of type $U^{(k,0)}_{A,B}$, where $A$ and $B$ are Hall monomials and by $x$. Then we get the next property of Hall monomials that are constants.

\begin{theorem}\label{monomial}
Let $M = [A, B] \neq 0$ be a monomial of $L^\delta$, where $A$, $B$ are monomials in the Hall basis, then {$M\in U^{(k)}$, where $k=\min\{r,s\}$, $\delta^r(A)=0$, $\delta^s(B)=0$ whereas $\delta^{r-1}(A)\neq0\neq\delta^{s-1}(B)$}. Moreover, $M\in U^{(1)}$ if at least one of $A$ and $B$ belongs to $L^\delta$.
\end{theorem}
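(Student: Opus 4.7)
My plan is to combine a Leibniz expansion of $\delta^{r+s-2}(M)=0$ with Proposition \ref{primality}, and to treat the degenerate case $r=s=1$ by induction on $\deg M$.

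First I would expand
\[
\delta^n([A,B])=\sum_{j=0}^{n}\binom{n}{j}\bigl[\delta^j(A),\delta^{n-j}(B)\bigr],
\]
which vanishes for every $n\geq 1$ since $M\in L^\delta$. Taking $n=r+s-2$, which is at least $1$ except in the degenerate situation $r=s=1$, a summand is non-zero only when $\delta^j(A)\neq 0$ and $\delta^{n-j}(B)\neq 0$, i.e., when $j\leq r-1$ and $n-j\leq s-1$; these two inequalities together force $j=r-1$. The surviving term is therefore a non-zero scalar multiple of $[\delta^{r-1}(A),\delta^{s-1}(B)]$, whence this bracket must vanish. Since $\delta^{r-1}(A)\neq 0\neq \delta^{s-1}(B)$, Proposition \ref{primality} produces the proportionality $\delta^{r-1}(A)=\alpha\,\delta^{s-1}(B)$ for some $\alpha\in K\setminus\{0\}$.

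This relation exhibits $M$, up to sign, as a generator of $U^{(k)}$ of the first kind. Assume without loss of generality $r\leq s$, so that $k=r$; taking $f=B$ and $g=A$ in the definition of $U^{(k)}$, one has $-M=[f,g]$ and $\delta^{s-1}(f)=\alpha^{-1}\delta^{k-1}(g)\neq 0$ with $s\geq k$, which is precisely the required defining condition. Hence $M\in U^{(k)}$. This disposes of the theorem whenever $r+s\geq 3$, and in particular it yields $M\in U^{(1)}$ in the subcase of the moreover clause in which exactly one of $A,B$ belongs to $L^\delta$.

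The remaining case $r=s=1$, where both $A$ and $B$ are Hall monomial constants and the Leibniz step above becomes vacuous, is what I expect to be the main obstacle. I would handle it by induction on $\deg M$: both $A$ and $B$ are Hall monomial constants of strictly smaller degree, and the argument of the previous paragraph, applied recursively to their Hall decompositions $A=[A_1,A_2]$ and $B=[B_1,B_2]$, places them in $U^{(1)}$ — the key structural input being that for a Hall constant of the two-generator free Lie algebra at least one of its two Hall children is itself a constant, forcing the associated value of $k$ to be $1$ at every step of the recursion. Since $U^{(1)}$ is a Lie subalgebra of $L^\delta$, the bracket $M=[A,B]$ lies in $U^{(1)}=U^{(\min\{r,s\})}$, closing the induction and simultaneously proving the \emph{moreover} statement.
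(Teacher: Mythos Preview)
Your Leibniz expansion at order $n=r+s-2$ combined with Proposition~\ref{primality} is exactly the paper's argument, and your observation that this single computation already absorbs the sub-case where precisely one of $A,B$ is a constant is a mild streamlining (the paper instead applies $\delta$ once, obtains $\delta(B)=\beta A$, and rewrites $M$ as $\tfrac{\beta^{-1}}{2}U^{(1,0)}_{B,B}$, which gives a slightly more explicit description of $M$ as a pseudodeterminant rather than a first-kind generator).

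The gap is in the $r=s=1$ step. You rely on the assertion that \emph{every} Hall monomial constant in $\mathcal L(x,y)$ has at least one Hall child which is already a constant; this is what you call the ``key structural input,'' and you offer no proof of it. Nothing in the preceding argument yields this: for a Hall constant $A=[A_1,A_2]$ the Leibniz/Proposition~\ref{primality} step only gives a proportionality $\delta^{r'-1}(A_1)=\alpha\,\delta^{s'-1}(A_2)$, and that relation in no way forces $\min\{r',s'\}=1$. Without the claim your recursion may hit a Hall constant $A$ with neither child constant, whereupon the inductive hypothesis places $A$ only in $U^{(k')}$ with $k'\ge 2$, not in $U^{(1)}$, and the bracket $[A,B]$ need not land in $U^{(1)}$. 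The paper's treatment of this case is itself laconic (``the result follows by induction'' after the explicit base cases $[x,y]=\tfrac12 U^{(1,0)}_{y,y}$ and $[x,y,x]=\tfrac12[U^{(1,0)}_{y,y},x]$), but it does not invoke your structural claim; if you wish to repair the argument you should either prove that claim directly or run the induction on the stronger statement ``every Hall monomial constant lies in $U^{(1)}$'' and check that the neither-child-constant case still delivers membership in $U^{(1)}$ rather than just $U^{(k)}$.
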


\begin{proof}
Let $\delta(M)=\delta([A,B])=0$. Notice that, by induction, for any $n\geq1$ we have the Leibniz rule
\begin{equation}\label{deriv1}
\delta^n(M)=\sum_{0\leq i\leq n}\binom{n}{i}[\delta^{n-i}(A),\delta^i(B)]=0.
\end{equation}
Suppose neither $A$ nor $B$ is a constant and let us fix $r,s\geq2$
such that
\[
\delta^r(A)=0=\delta^s(B), \ \ \ \delta^{r-1}(A)\neq0\neq\delta^{s-1}(B),
\]
where $s\leq r$. Now assume that $n=r+s-2$. Then we get from Equation (\ref{deriv1}):
\begin{align}
0&=\delta^{r+s-2}(M)=\sum_{0\leq i\leq r+s-2}\binom{r+s-2}{i}[\delta^{{r+s-2}-i}(A),\delta^i(B)]\nonumber\\
&=\binom{r+s-2}{s-1}[\delta^{r-1}(A),\delta^{s-1}(B)]+\sum_{i\neq s-1}\binom{r+s-2}{i}[\delta^{{r+s-2}-i}(A),\delta^i(B)].\nonumber
\end{align}
If $i>s-1$, then $\delta^i(B)=0$; otherwise, if $i<s-1$, then ${r+s-2}-i>r-1$, and $\delta^{{r+s-2}-i}(A)=0$. Hence the second sum is zero and we get
$[\delta^{r-1}(A),\delta^{s-1}(B)]=0$.
Because $\delta^{r-1}(A)\neq0$ and $\delta^{s-1}(B)\neq0$, by Proposition \ref{primality}, we have

\begin{equation}\label{goodrel}
 \delta^{r-1}(A)=\alpha\delta^{s-1}(B),   
\end{equation}
for some non-zero $\alpha\in K$, 
which proves the first assertion.

Suppose now $A,B$ are constants. If $M$ has degree 2, then $M$ is a scalar multiple of $[x,y]=\frac{1}{2}U_{y,y}^{(1,0)}$ and we are done. If $\mbox{deg}(M) = 3$, then for some $0\neq\alpha\in K$,
\[
M = \alpha [x, y, x] = \alpha [[x, y], x]=(\alpha/2)[U^{(1,0)}_{y,y}, x]
\]
and consequently $M\in U^{(1)}$. If $\mbox{deg}(M)>3$, since $A$, $B$ are monomials such that $\mbox{deg}(A)$, $\mbox{deg}(B) < \mbox{deg}(M)$, the result follows by induction. Now suppose that $A$ is a constant and $B$ is not a constant. Then, by Proposition \ref{primality},
\[
0 = \delta(M) = [A, \delta(B)],
\]
which implies that $\delta(B) = \beta A$ for some $0\neq\beta\in K$, since $A \neq 0\neq\delta(B)$. Therefore $M = \beta^{-1} [\delta(B), B]$ and the proof follows because $M=\frac{\beta^{-1}}{2}U_{B,B}^{(1,0)}$. 
\end{proof}

\begin{remark}By Proposition \ref{lemma3.4}, in the hypotheses of Theorem \ref{monomial}, we get if $A=\delta^k(B)$ for some $k\geq1$, then $M$ is in the algebra generated by the $ U_{C,C}^{(k, 0)}$ for some $k$. This forces, in order to figure out explicitly how the monomials that are constants look like, to focus on the attention on such a set of constants. If $M=[A,B]$ is a constant but neither $A$ nor $B$ is, with the same hypotheses of Theorem \ref{monomial}, then \[0 = \delta(M)= \delta^{r+ s - 2}(M) = [\delta^{r-1}(A),\delta^{s-1}(B)]=U_{A,B}^{(r-1,s-1)}:\] if some kind of \textit{integration theory} could be developed herein, $M$ looks like very close to being a pseudodeterminant. Unfortunately, the algebras $U^{(k)}$ are very difficult to be described in terms of generators; this justifies the computations of the next section. Our goal now is giving experimental results in order to restrict future studies on a smaller class of generators.
\end{remark}





\section{Lie polynomials of small degree}

In this section, we shall calculate the generators of the constants of small degree. The results show that if $f = f(x, y) \in L^{\delta}$  is a polynomial of degree less than or equal to $7$, then $f $ belongs to the subalgebra generated by $x$, $[y, x]$, $[y, x, y, [y, x, x]]$. In particular, we observe that these polynomials are equal to a multiple scalar of the determinant 
\[
\left|
\begin{array}{cc}
A & \delta(A)\\
B & \delta(B)	
\end{array}
\right|
=[A,\delta(B)]-[\delta(A),B],
\]
when $A = B = y$ and $A = B = [y, x, y]$ respectively.

We shall present two proofs: a direct one that shows, degree by degree of the constants, the algebra the constants belong to, and their vector space structure; a more theoretical one using invariant theory and having a sharply less number of calculations.

\subsection{Direct proof}
We start off with the next.

\begin{lemma}\label{previouslemma}
For all $a \geq 0$ and $b \geq1$,
$$[y, x, \underbrace{x, \ldots, x}_{a}, y, \underbrace{x, \ldots, x}_{b}] = [y, x,  \underbrace{x, \ldots, x}_{a+ b}, y] + \sum_{i = 0}^{b-1}  \binom{b}{i} [y, x, \underbrace{x, \ldots, x}_{a + i}, [y, \underbrace{x, \ldots, x}_{b - i}]]$$
where $\displaystyle\binom{b}{i} = \dfrac{b!}{i! (b-i)!}$ is a binomial coefficient.
\end{lemma}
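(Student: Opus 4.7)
The plan is to recognize the claimed identity as an instance of the Leibniz rule for the iterated inner derivation $D$ defined by $D(v) := [v, x]$. Since products are left-normed, $[w, \underbrace{x, \ldots, x}_{b}] = D^{b}(w)$ for every $w \in L$, so the left-hand side of the lemma equals $D^{b}([u, y])$ with $u := [y, x, \underbrace{x, \ldots, x}_{a}]$.

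I would carry this out in three short steps. First, I would verify that $D$ is a derivation of $L$; this is exactly the Jacobi identity written in the form (\ref{identidade_importante}) with $c = x$, which rearranges to $[[a, b], x] = [[a, x], b] + [a, [b, x]]$. Second, by induction on $b$ (using the derivation property together with Pascal's rule $\binom{b-1}{i-1} + \binom{b-1}{i} = \binom{b}{i}$), one obtains the standard binomial formula
\[
D^{b}([u, y]) \;=\; \sum_{i=0}^{b} \binom{b}{i}\, \bigl[D^{i}(u),\, D^{b-i}(y)\bigr].
\]
Third, I would unravel the notation by noting that $D^{i}(u) = [y, x, \underbrace{x, \ldots, x}_{a+i}]$ and $D^{j}(y) = [y, \underbrace{x, \ldots, x}_{j}]$. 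Isolating the extreme summand $i = b$ (for which $D^{0}(y) = y$) produces the leading term $[y, x, \underbrace{x, \ldots, x}_{a+b}, y]$ of the right-hand side, while the remaining indices $0 \leq i \leq b - 1$ furnish exactly the tailing sum.

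I do not expect any genuine obstacle: once the left-normed bracket is reinterpreted as an iterate of $D$, the identity becomes the familiar Leibniz formula and the verification is mechanical. The only bit of care needed is to confirm that the inner factors $[y, \underbrace{x, \ldots, x}_{b-i}]$ appearing on the right coincide with $D^{b-i}(y)$, which they do by the left-normed convention. Alternatively, one could prove the statement by a direct induction on $b$, applying (\ref{identidade_importante}) to the outermost pair of brackets and reindexing via Pascal's rule; this amounts to the same computation, only unpacked into the language of left-normed commutators.
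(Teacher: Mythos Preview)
Your proof is correct. Recognizing the left-hand side as $D^{b}([u,y])$ for the inner derivation $D=\mathrm{ad}_x$ (acting on the right) and then invoking the binomial Leibniz formula is exactly the right move; the separation of the $i=b$ summand and the identification $D^{b-i}(y)=[y,\underbrace{x,\ldots,x}_{b-i}]$ are both fine under the left-normed convention.

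The paper proves the identity by the direct induction on $b$ that you mention as your alternative: it commutes the inner $y$ past one $x$ via (\ref{identidade_importante}), applies the inductive hypothesis to the two resulting pieces, and recombines using Pascal's rule. This is precisely the unpacked proof of the Leibniz formula for $D^{b}$ specialized to the pair $(u,y)$, so the two arguments are the same computation; your version simply names the mechanism and is a bit cleaner, while the paper's version stays entirely in the language of left-normed commutators and avoids introducing $D$.
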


\begin{proof} The proof will be performed  by induction on $b$. If $b = 1$ the result follows from (\ref{identidade_importante}).
We have that
\begin{eqnarray*}
[y, x, \underbrace{x, \ldots, x}_{a}, y, \underbrace{x, \ldots, x}_{b+1}]& = & [y, x, \underbrace{x, \ldots, x}_{a+ 1}, y, \underbrace{x, \ldots, x}_{b}] + [y, x, \underbrace{x, \ldots, x}_{a}, [y, x], \underbrace{x, \ldots, x}_{b}]\\
& = & [y, x, \underbrace{x, \ldots, x}_{a+ b + 1}, y ] + [y, x, \underbrace{x, \ldots, x}_{a+ b}, [y, x] ]\\
& & + \sum_{i = 0}^{b-1}  \binom{b}{i} [y, x, \underbrace{x, \ldots, x}_{a + 1 +  i}, [y, \underbrace{x, \ldots, x}_{b - i}]] \\
& & + \sum_{j = 0}^{b-1}  \binom{b}{j} [y, x, \underbrace{x, \ldots, x}_{a + j}, [y, \underbrace{x, \ldots, x}_{(b + 1) - j}]]
\end{eqnarray*}
\begin{eqnarray*}
& = & [y, x, \underbrace{x, \ldots, x}_{a+ b + 1}, y ] + [y, x, \underbrace{x, \ldots, x}_{a+ b}, [y, x] ]\\
& & + \sum_{j = 1}^{b}  \binom{b}{j-1} [y, x, \underbrace{x, \ldots, x}_{a + j}, [y, \underbrace{x, \ldots, x}_{(b+1) - j}]]  \\
& & + \sum_{j = 0}^{b-1}  \binom{b}{j} [y, x, \underbrace{x, \ldots, x}_{a + j}, [y, \underbrace{x, \ldots, x}_{(b + 1) - j}]] \\
& = &[y, x,  \underbrace{x, \ldots, x}_{a+ b+ 1}, y] + \sum_{j = 0}^{b}  \binom{b+ 1}{j} [y, x, \underbrace{x, \ldots, x}_{a + j}, [y, \underbrace{x, \ldots, x}_{(b+1) - j}]]
\end{eqnarray*}
where the first equality holds by the induction hypothesis and the last follows by the relation $ \binom{b+ 1}{j} =  \binom{b}{j-1} +  \binom{b}{j}$ for all $1 \leq j \leq b - 1$.
\end{proof}

\begin{corollary}\label{previouscorollary}
Let $f$ be a polynomial such that $\mbox{deg}_y(f) = 2$. Then $f\in L^{\delta}$ if and only if $f $ belongs to the subalgebra generated by $x, [y, x]$.
\end{corollary}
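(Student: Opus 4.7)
The direction $(\Leftarrow)$ is immediate: $\delta(x)=0$ and $\delta([y,x])=[\delta(y),x]+[y,\delta(x)]=[x,x]=0$, so the subalgebra $H\subseteq L$ generated by $x$ and $[y,x]$ sits inside $L^{\delta}$.

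For $(\Rightarrow)$, since $\delta$ is graded it suffices to treat $f$ homogeneous of total degree $n\geq 2$. I would introduce the shorthand $v_k:=[y,\underbrace{x,\ldots,x}_{k}]$ for $k\geq 0$, so that $v_1=[y,x]$ and $v_k=[v_1,\underbrace{x,\ldots,x}_{k-1}]\in H$ for every $k\geq 1$. The first step is to show that $f$ is a $K$-linear combination of the left-normed monomials
\[
M_{a,b}:=[y,x,\underbrace{x,\ldots,x}_{a},y,\underbrace{x,\ldots,x}_{b}],\qquad a+b=n-3,\ a,b\geq 0.
\]
This follows from the standard reduction of any Lie polynomial on $\{x,y\}$ to left-normed monomials, together with anti-symmetry and the vanishings $[x,x]=[y,y]=0$, which kill every left-normed monomial whose first two entries coincide. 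Applying Lemma \ref{previouslemma} term by term then rewrites each $M_{a,b}$ as $[v_{n-2},y]$ plus a $K$-linear combination of expressions of the form $[v_{p+1},v_{q}]$ with $p\geq 0$ and $q\geq 1$; by the preceding remark all such expressions lie in $H$. Collecting contributions, $f=\lambda\,[v_{n-2},y]+h$ for some $\lambda\in K$ and some $h\in H$.

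To finish I would apply $\delta$: since $\delta(v_{n-2})=0$ and $\delta(h)=0$, a one-line computation gives $\delta(f)=\lambda\,[v_{n-2},\delta(y)]=\lambda\, v_{n-1}$, where $v_{n-1}$ is a nonzero Hall monomial. Because $f\in L^{\delta}$ this forces $\lambda=0$, and therefore $f=h\in H$, proving the claim. The only step that requires genuine care is the initial reduction to the $M_{a,b}$-form, which exploits the free-Lie-algebra structure on $\{x,y\}$; the rest of the argument is a direct appeal to Lemma \ref{previouslemma} and the single $\delta$-computation above.
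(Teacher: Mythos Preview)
Your argument is correct and is essentially the paper's own proof: both reduce to the left-normed span of the $M_{a,b}$, invoke Lemma~\ref{previouslemma} to isolate the single coefficient of $[v_{n-2},y]$, and use $\delta(f)=0$ to kill it. The only difference is the order of operations---the paper applies $\delta$ first to obtain $\sum_i\alpha_i=0$ and then appeals to the lemma, whereas you apply the lemma first and then compute $\delta$---but the content is identical.
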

\begin{proof} We show the multihomogeneous case because the generalization follows directly from this result. Let $f$ be a multihomogeneous polynomial, $\mbox{deg}_y(f) = 2$ such that $f \in L^\delta$.
Since left normed  brackets generate $L'$, we assume that $f$ is of the form
$$f = \alpha_0 [y, x, y, \underbrace{x, \ldots, x}_{k}] + \alpha_1 [y, x, x, y, \underbrace{x, \ldots, x}_{k-1}] + \cdots + \alpha_k [y, x, \underbrace{x, \ldots, x}_{k}, y]$$
for some scalars $\alpha_i\in K$, $i=0,\ldots,k$. The fact that $\delta(f) = 0$ implies 
$$(\alpha_0 + \cdots + \alpha_k)[y, x, \underbrace{x, \ldots, x}_{k + 1}] = 0,$$
and therefore $\alpha_0 + \cdots + \alpha_k = 0$. The result follows from Lemma \ref{previouslemma}.
\end{proof}

In the next propositions, we show that the constants of degree less than or equal to $7$ are in the Lie subalgebra generated by $x, [y, x], [y, x, y, [y, x, x]]$.

\begin{proposition}If  $f \in L^\delta$  is a nonzero polynomial of degree less than or equal to $5$, then $f $ belongs to the subalgebra generated by $x, [y, x]$.
\end{proposition}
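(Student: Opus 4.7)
The plan is to reduce to the multi-homogeneous case and then proceed by cases on the $y$-degree. Since $\delta$ is homogeneous with respect to the bigrading by $(\deg_x, \deg_y)$, each multi-homogeneous component of a constant is again a constant, so I may assume $f$ is multi-homogeneous of bidegree $(a,b)$ with $a+b\leq 5$ and analyze the possible $(a,b)$ one at a time.

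For $b=0$ the only nonzero possibility is $f\in Kx$, and for $b=2$ the statement is exactly Corollary \ref{previouscorollary}. For $b=1$ the Hall basis of bidegree $(a,1)$ consists of the single monomial $[y,x,\underbrace{x,\ldots,x}_{a}]$, which is a constant (because $[x,x]=0$ forces $\delta([y,x,x,\ldots,x])=0$) and lies by construction in the subalgebra generated by $x$ and $[y,x]$.

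It therefore remains to show that no nonzero constant exists when $b\geq 3$ and $a+b\leq 5$, i.e.\ for $(a,b)\in\{(0,3),(0,4),(0,5),(1,3),(1,4),(2,3)\}$. The three cases with $a=0$ are trivial since $[y,y]=0$ forces every monomial of pure $y$-degree $\geq 2$ to vanish. For $(1,3)$ and $(1,4)$ the Hall basis contains, respectively, the single element $[y,x,y,y]$ and $[y,x,y,y,y]$; applying the Leibniz rule and straightening each non-Hall summand via the identity \eqref{identidade_importante} produces a nonzero Hall-basis expression in bidegree $(a+1,b-1)$, so the restriction of $\delta$ to these one-dimensional pieces is injective. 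For $(2,3)$ the Hall basis has exactly two elements, $[y,x,x,y,y]$ and $[y,x,y,[y,x]]$; one writes a general linear combination $\alpha[y,x,x,y,y]+\beta[y,x,y,[y,x]]$, applies $\delta$, and straightens the output in terms of the Hall basis of bidegree $(3,2)$, namely $[y,x,x,x,y]$ and $[y,x,x,[y,x]]$. The resulting homogeneous linear system in $(\alpha,\beta)$ turns out to be triangular with nonzero diagonal, so its only solution is $\alpha=\beta=0$.

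The only mildly delicate step is the bidegree $(2,3)$ case, where one must carefully reduce monomials such as $[y,x,x,y,x]$ that appear in $\delta([y,x,x,y,y])$ to Hall form via \eqref{identidade_importante} before reading off coefficients; everything else is an immediate Leibniz computation followed by linear independence of the Hall basis.
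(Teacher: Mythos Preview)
Your argument is correct and follows essentially the same route as the paper: reduce to multihomogeneous pieces, dispatch $\deg_y(f)\le 2$ via Corollary~\ref{previouscorollary} and the obvious $b=0,1$ cases, and then show there are no nonzero constants in the remaining bidegrees with $\deg_y(f)\ge 3$ by applying $\delta$ to a general element and invoking linear independence of the Hall basis. The only cosmetic difference is that you parametrize each bidegree piece directly by Hall monomials (e.g.\ $[y,x,x,y,y]$ and $[y,x,y,[y,x]]$ in bidegree $(2,3)$), whereas the paper writes the same space in terms of left-normed monomials (e.g.\ $[y,x,x,y,y]$ and $[y,x,y,y,x]$) and straightens afterward; both choices lead to the same linear system with the same conclusion.
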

\begin{proof} Suppose without loss of generality that $f$ is a multihomogeneous polynomial.

If $\mbox{deg}_y(f) = 1$, $f $ belongs to the subalgebra generated by $x, [y, x]$ and consequently is a constant. If $\mbox{deg}_y(f) = 2$ the result follows from Corollary \ref{previouscorollary}.
Suppose $f$ homogeneous of degree $4$ such that $\mbox{deg}_y(f) = 3$. In this case $f = \alpha [y, x, y, y]$, for some $\alpha\in K$, and consequently $\delta(f) = 2 \alpha [y,x, x, y ] = 0$. This implies that $\alpha = 0$. 

Now suppose $f$ homogeneous of degree $5$. In this case it is sufficient to check two possibilities: $\mbox{deg}_y(f) = 3$ and $4$.
Assume that $\mbox{deg}_y(f) = 3$. Then
\[
f=\alpha [y, x, x, y, y] + \beta [y, x, y, y, x] \in L^\delta
\]
for some $\alpha,\beta\in K$. We have the following:
\begin{align}
0&=\delta(f)=(\alpha + 2\beta)[y, x, x, y, x] + \alpha[y, x, x, x, y]\nonumber\\
&= (\alpha + 2\beta)([y, x, x, x, y] + [y, x, x, [y, x]]) + \alpha[y, x, x, x, y].\nonumber
\end{align}
Since $[y, x, x, x, y]$ and $[y, x, x, [y, x]]$ belong to Hall's basis we have that $\alpha = \beta = 0$.
Now let $\mbox{deg}_y(f) =4$. Then $f=\alpha [y, x, y, y, y] \in L ^\delta$ for some $\alpha\in K$, that is, 
\[
\alpha [y, x, x, y, y] + \alpha [y, x, y, x, y] + \alpha [y, x, y, y, x] = 0.
\]
Since $[y, x, x, y, y] = [y, x, y, x, y]$ and $[y, x, y, y, x] = [y, x, y, x, y] + [y, x, y, [y, x]]$, the last equation implies that $3\alpha [y, x, x, y, y] + \alpha[y, x, y, [y,x]] = 0$. Therefore $\alpha = 0$ since $ [y, x, x, y, y]$, $[y, x, y, [y, x]]$ belong to Hall's basis of $L$ . 
\end{proof}

\begin{proposition}
If $f \in L^\delta$  is a polynomial of degree $6$, then $f $ belongs to the subalgebra generated by $x, [y, x]$ and $[y, x, y, [y, x, x]]$.
\end{proposition}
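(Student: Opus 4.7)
The plan is to reduce to the multihomogeneous case (since $\delta$ preserves multihomogeneous components) and then split according to $p := \deg_y(f) \in \{1, 2, 3, 4, 5\}$ with $q := \deg_x(f) = 6 - p$. For $p = 1$ the polynomial $f$ is a scalar multiple of $[y, x, x, x, x, x]$ and so already lies in the subalgebra generated by $x$ and $[y, x]$; for $p = 2$ one invokes Corollary \ref{previouscorollary} directly.

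For $p = 5$ the only Hall monomial in bidegree $(q,p)=(1,5)$ is $[y, x, y, y, y, y]$ (other configurations of five $y$'s and one $x$ collapse through $[y, y] = 0$ and the Hall conditions), so $f$ is a scalar multiple of it; expanding $\delta(f)$ by the Leibniz rule and rewriting the result in the Hall basis in bidegree $(2,4)$ through (\ref{identidade_importante}) forces this scalar to vanish. For $p = 4$ I would enumerate the Hall monomials in bidegree $(2,4)$, write $f$ as a generic linear combination, compute $\delta(f)$ with the help of Lemma \ref{previouslemma}, and set the coefficients of the resulting element of the Hall basis in bidegree $(3,3)$ equal to zero. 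Since no nonzero element of bidegree $(2,4)$ (in the sense $\deg_x = 2$, $\deg_y = 4$) belongs to the claimed subalgebra generated by $x$, $[y, x]$, $[y, x, y, [y, x, x]]$ --- four copies of $[y, x]$ would give the right $\deg_y$ but always involve $[[y, x], [y, x]] = 0$, and any use of the new generator, which has bidegree $(3,3)$, pushes $\deg_x$ above $2$ --- the linear system should force all coefficients to vanish.

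The critical case is $p = 3$, where the new generator $[y, x, y, [y, x, x]]$ appears. That this element is a constant follows from $\delta([y, x, y]) = [y, x, x]$ and $\delta([y, x, x]) = 0$, exhibiting it as $\tfrac{1}{2}\, U^{(1,0)}_{[y, x, y],\, [y, x, y]}$. I would list the Hall basis of bidegree $(3,3)$, write $f$ as a generic linear combination, apply the Leibniz rule to obtain $\delta(f)$, and rewrite the result in the Hall basis of bidegree $(4,2)$ via (\ref{identidade_importante}) and Lemma \ref{previouslemma}. The expected outcome is a homogeneous linear system whose solution space is one-dimensional, spanned by $[y, x, y, [y, x, x]]$; this matches the fact that in bidegree $(3,3)$ the only elements of the claimed subalgebra are scalar multiples of the new generator, because building $(3,3)$ from $x$ and $[y, x]$ alone requires three copies of $[y, x]$ and any bracketing of these eventually produces $[[y, x], [y, x]] = 0$.

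The main obstacle is the bookkeeping in the $p = 3$ step: one must explicitly rewrite each bidegree-$(3, 3)$ Hall monomial and its image under $\delta$ in the bidegree-$(4, 2)$ Hall basis before solving the linear system. The cases $p \in \{4,5\}$ follow the same template with strictly smaller systems; the cases $p \leq 2$ are immediate from the previously established results.
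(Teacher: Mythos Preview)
Your plan is correct and matches the paper's proof: reduce to multihomogeneous components, dispose of $\deg_y(f)\le 2$ via the earlier results, and for $\deg_y(f)\in\{3,4,5\}$ write $f$ in a chosen spanning set, compute $\delta(f)$, rewrite it in the Hall basis of the target bidegree via (\ref{identidade_importante}), and solve the resulting linear system. The only cosmetic difference is that the paper parametrizes $f$ by left-normed monomials (so that in the $\deg_y(f)=3$ case it works with four spanning monomials and a two-parameter solution that collapses to a single constant), whereas you propose to use the three Hall monomials of bidegree $(3,3)$ directly and obtain the one-dimensional solution space immediately; the computations and conclusions are otherwise identical.
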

\begin{proof}As before, we may assume that $f$ is a multihomogeneous
polynomial. By Corollary \ref{previouscorollary}, there are three cases to check: $\mbox{deg}_y(f) = 3$, $4$ and $5$. Suppose $\alpha [y, x, y, y, y, y] \in L ^\delta$, i.e., $2\alpha [y, x, x, y, y, y] + \alpha [y, x, y, y, x, y] + \alpha [y, x, y, y, y, x] = 0$. Since 
$$[y, x, y, y, x, y] = [y, x, x, y, y, y] + [y, x, y, y, [y, x]]$$
and
$$[y, x, y, y, y, x] = [y, x, x, y, y, y] + 2[y, x, y, y, [y, x]],$$ 
it follows that $\alpha = 0$. Suppose
\begin{equation} \label{equacao_deg_y_4}
f = \alpha [y, x, x, y, y, y] + \beta [y, x, y, y, x, y] + \gamma [y, x, y, y, y, x] \in L^\delta, 
\end{equation}
that is, 
\begin{eqnarray*}
\delta(f) &=& \alpha [y, x, x, x, y, y] + (\alpha+ 2\beta) [y, x, x, y, x, y]\\
& &+ (\alpha + 2\gamma)[y, x, x, y, y, x]+ (\beta + \gamma)[y, x, y, y, x, x]=0.
\end{eqnarray*}
Since $v_1 = [y, x, x, x, y, y]$, $v_2 = [y, x, x, y, [y, x]]$, $v_3 = [y, x, y, [y, x, x]]$ belong to Hall's basis and by simple calculations using (\ref{identidade_importante}) 
\begin{align*}
[y, x, x, y, x, y] &= v_1 + v_2 - v_3,\\
[y, x, x, y, y, x] &= v_1 + 2v_2 - v_3,\\
[y, x, y, y, x, x] &= v_1 + 3v_2,
\end{align*}
we have the following system equations: $\alpha + \beta + \gamma = 0$ and $3 \alpha + 5 \beta + 7 \gamma = 0$  whose solution is $\beta = -2\alpha$ and $\gamma = \alpha$. Replacing this solution in (\ref{equacao_deg_y_4}) we get $f = - \alpha [y, x, y, [y, x, y]]  = 0$. Finally consider
\begin{equation} \label{equacao_deg_y_3}
f = \alpha [y, x, x, y, y, x] + \beta [y, x, y, y, x, x] + \gamma [y, x, x, x, y, y] + \epsilon [y, x, x, y, x, y]\in L^\delta, 
\end{equation}
that is, 
\[
(\alpha + \gamma) [y, x, x, x, y, x] + (\alpha + 2\beta + \epsilon)[y, x, x, y, x, x] + (\gamma + \epsilon)[y, x, x, x, x, y] = 0.
\]
Denote by $v_1 = [y, x, x, x, x, y]$ and $v_2 = [y, x, x, x, [y, x]]$. Since $[y, x, x, x, y, x] = v_1 + v_2$ and $[y, x, x, y, x, x] = v_1 + 2v_2$ follow that $\alpha + \beta + \gamma + \epsilon = 0$ and $3\alpha + 4\beta + \gamma + 2\epsilon = 0$. Consequently $\gamma = \alpha + 2\beta$ and $\epsilon = -2\alpha - 3\beta$. Replacing this solution in (\ref{equacao_deg_y_3}) we obtain 
\begin{eqnarray*}
f &=& \alpha([y, x, x, y, y, x] + [y, x, x, x, y, y] -2[y, x, x, y, x, y])\\
& &+ \beta ([y, x, y, y, x, x] + 2[y, x. x, x, y, y] - 3[y, x, x, y, x, y]) \in L^\delta.
\end{eqnarray*}
By simple calculations we conclude that 
$$[y, x, x, y, y, x] + [y, x, x, x, y, y] -2[y, x, x, y, x, y] = [y, x, y, [y, x, x]],$$ 
$$[y, x, y, y, x, x] + 2[y, x. x, x, y, y] - 3[y, x, x, y, x, y] = 3[y, x, y, [y, x, x]]$$
and this ends the proof.
\end{proof}

Next we prove that if $f \in L^\delta$  is a polynomial of degree $7$ then $f $ belongs to the subalgebra generated by  $x$, $[y, x]$, and $[y, x, x, y, [y, x, x]] - [y, x, x, x, [y, x, y]]$. We observe that $$[y, x, x, y, [y, x, x]] - [y, x, x, x, [y, x, y]] = [y, x, y, [y, x, x], x],$$ that is, the product between $[y, x, y, [y, x, x]]$ and $x$.

\begin{proposition}If $f \ne 0$ is a polynomial of degree $7$  then $f \in L^\delta$ if and only if $f $ belongs to the subalgebra generated by $x$, $[y, x]$, and $[y, x, x, x, [y, x, y]] - [y, x, x, y, [y, x, x]]$.
\end{proposition}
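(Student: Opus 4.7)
The plan is to follow the template set by the degree $5$ and $6$ propositions: reduce to the multihomogeneous case and then split the analysis according to $\deg_y(f)$. Since $\delta$ strictly lowers $\deg_y$ and preserves total degree, for each $k\in\{1,\ldots,6\}$ I must describe the multihomogeneous constants of total degree $7$ and $y$-degree $k$. The cases $k=1$ (where the only Hall monomial is $[y,x,x,x,x,x,x]$, which itself lies in the subalgebra generated by $x$ and $[y,x]$) and $k=2$ (directly handled by Corollary \ref{previouscorollary}) are immediate, so only $k \in \{3,4,5,6\}$ requires serious work.

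For each such $k$ the procedure is: (a) enumerate the Hall monomials of bidegree $(7-k,k)$ and write $f$ as a generic linear combination of them with unknown scalars; (b) apply $\delta$ term by term via Leibniz, so that $\delta(f)$ becomes a sum of left-normed commutators of bidegree $(8-k,k-1)$; (c) rewrite these left-normed commutators in the Hall basis by repeated application of the identity (\ref{identidade_importante}), exactly as in the degree $6$ proof where expansions such as those of $[y,x,x,y,x,y]$, $[y,x,x,y,y,x]$, and $[y,x,y,y,x,x]$ are carried out; (d) require each Hall coefficient of $\delta(f)$ to vanish, producing a homogeneous linear system in the unknown scalars; and (e) solve the system.

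The backward direction of the equivalence is quick: $x$ and $[y,x]$ are visibly in $L^\delta$, while the extra generator $h := [y,x,x,x,[y,x,y]] - [y,x,x,y,[y,x,x]]$ equals $[y,x,y,[y,x,x],x] = [g,x]$, where $g = [y,x,y,[y,x,x]]$ is the degree $6$ constant produced by the preceding proposition, and consequently $\delta(h) = [\delta(g),x] + [g,\delta(x)] = 0$. For the forward direction, in each bidegree the solution space of the linear system built in step (d) is expected to be spanned, modulo obvious products of $x$, $[y,x]$, and lower-degree constants already contained in the target subalgebra, by a single ``new'' element, which must then be identified as a scalar multiple of an appropriate product in $x$, $[y,x]$, $g$, and $h$; a final use of Jacobi's identity will put each surviving solution into that normal form.

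The principal obstacle is purely computational: case $k=3$, of bidegree $(4,3)$, contains by far the greatest number of Hall monomials and so produces the largest linear system and the longest chain of rewritings via (\ref{identidade_importante}). Everything else is a direct adaptation of the method that already succeeded in degree $6$, and the theoretical proof promised in the next subsection (using invariant-theoretic Hilbert series) would provide an \emph{a priori} dimension count that makes the one-dimensional ``new constants'' phenomenon in each bidegree transparent and thus serves as a sanity check on the explicit calculations.
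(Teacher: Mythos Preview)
Your plan is correct and matches the paper's own proof almost step for step: the paper likewise reduces to multihomogeneous $f$, disposes of $\deg_y(f)\le 2$ via Corollary~\ref{previouscorollary}, and for each $\deg_y(f)\in\{3,4,5,6\}$ expands $f$ in the Hall basis, applies $\delta$ to each basis vector, rewrites the result in the Hall basis one $y$-degree lower, and solves the resulting linear system. Two small corrections to your stated expectations (neither affects the method): the cases $\deg_y(f)=4,5,6$ all force $f=0$, so the only nontrivial bidegree is $(4,3)$, where the solution space is two-dimensional, spanned by the ``obvious product'' $[y,x,x,[y,x],[y,x]]$ together with the new generator $h$; and $h=-[g,x]$ rather than $+[g,x]$.
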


\begin{proof} As before, we may assume that $f$ is a multihomogeneous
polynomial. By Corollary \ref{previouscorollary}, we have four cases to check: $\mbox{deg}_y(f) = 3$, $4$, $5$ and $6$. 
\begin{enumerate}
\item $\mbox{deg}_y(f) = 3$. We write $f$ as a linear combination of the Hall's basis.  If we denote by 
\begin{align*}
v_1 &= [y, x, x, x, x, y, y],\\
v_2 &= [y, x, x, x, y, [y, x]],\\
v_3 &= [y, x, x, [y,x], [y, x]],\\
v_4 &= [y, x, x, x, [y, x, y]],\\
v_5 &= [y, x, x, y, [y, x, x]]
\end{align*}
and $f = \alpha_1 v_1 + \alpha_2 v_2  + \alpha_3 v_3 + \alpha_4 v_4 + \alpha_5 v_5$, $\delta(f) = 0$ implies that 
$$\alpha_1 \delta(v_1) + \alpha_2 \delta (v_2)  + \alpha_3 \delta (v_3) + \alpha_4 \delta (v_4)+ \alpha_5 \delta (v_5) = 0.$$ 
Since $\delta(v_1)  = 2[y, x, x, x, x, x, y ]  +  [y, x, x, x, x, [y, x]]$, $\delta(v_2) = [y, x, x, x, x, [y, x]]$, $\delta(v_3) = 0$, $\delta(v_4) = [y, x, x, x, [y, x, x]]$ and $\delta(v_5) = [y, x, x, x, [y, x, x]]$ we have that $\alpha_1 = \alpha_2 = 0$ and $\alpha_4 + \alpha_5 = 0$. Therefore $f \in  L ^\delta$ if and only if $f$ is a linear combination of $[y, x, x, [y, x],[y, x]]$ and $[y, x, x, x, [y, x, y]] - [y, x, x, y, [y, x, x]]$.
 
\item $\mbox{deg}_y(f) = 4$.  Denote by 
\begin{align*}
v_1 &= [y, x, x, x, y, y, y],\\
v_2 &= [y, x, x, y, y, [y, x]],\\
v_3 &= [y, x, y, [y,x], [y, x]],\\
v_4 &= [y, x, x, y, [y, x, y]],\\
v_5 &= [y, x, y, y, [y, x, x]],
\end{align*}
we have that $f = \alpha_1 v_1 + \alpha_2 v_2  + \alpha_3 v_3 + \alpha_4 v_4 + \alpha_5 v_5$. Therefore $\delta(f) = 0$ implies that 
$$\alpha_1 \delta(v_1) + \alpha_2 \delta (v_2)  + \alpha_3 \delta (v_3) + \alpha_4 \delta (v_4)+ \alpha_5 \delta (v_5) = 0.$$ 
Since  
\begin{align*}
\delta(v_1)  &= 3 [y, x, x, x, x, y, y] + 3[y, x, x, x, y, [y, x]] + 2[y, x, x, x, [y, x, y]],\\
\delta(v_2) &= 2[y, x, x, x, y, [y, x]] + [y, x, x, [y, x], [y, x]],\\ \delta(v_3) &=  [y, x, x, [y, x], [y, x]],\\
\delta(v_4) &= [y, x, x, x, [y, x, y]] + [y, x, x, y, [y, x, x]],\\
\delta(v_5) &= 2[y, x, x, y, [y, x, x]],
\end{align*}
we have that $\alpha_i =  0$ for all $i \in \{1, 2, 3, 4, 5\}$.

\item $\mbox{deg}_y(f) = 5$.   We write 
\begin{align*}
v_1 &= [y, x, x, y, y, y, y],\\
v_2 &= [y, x, y, y, y, [y, x]],\\
v_3 &= [y, x, y, y, [y, x, y]],
\end{align*}
and $f = \alpha_1 v_1 + \alpha_2 v_2  + \alpha_3 v_3$. Therefore $\delta(f) = 0$ implies that 
$$\alpha_1 \delta(v_1) + \alpha_2 \delta (v_2)  + \alpha_3 \delta (v_3) = 0.$$ 
Since 
\begin{align*}
\delta(v_1) 
&= 4[y, x, x, x, y, y, y] 
+ 6[y, x, x, y, y, [y, x, y]] \\
&\quad + 8[y, x, x, y, [y, x, y]] 
- 3[y, x, y, y, [y, x, x]], \\
\delta(v_2) 
&= 3[y, x, x, y, y, [y, x]] 
+ [y, x, y, [y, x], [y, x]], \\
\delta(v_3) 
&= 2[y, x, x, y, [y, x, y]] 
+ [y, x, y, y, [y, x, x]],
\end{align*}
 we have that  $\alpha_i =  0$ for all $i$.

\item $\mbox{deg}_y(f) = 6$. Let $f = \alpha [y, x, y, y, y, y, y]  \in L^\delta$, that is, 
\[
0 = \alpha(2 [y, x, x, y, y, y, y]+ [y, x, y, y, x, y, y] + [y, x, y, y, y, x, y] + [y, x, y, y, y, y, x]).
\]
By simple calculations 
\[
2 [y, x, x, y, y, y, y] + [y, x, y, y, x, y, y] + [y, x, y, y, y, x, y] + [y, x, y, y, y, y, x]
\]
is a non zero linear combination of the Hall's basis $[y, x, x, y, y, y, y]$, $[y, x, y, y, y, [y, x]]$ and $[y, x, y, y, [y, x, y]]$. Thus $\alpha = 0$.
\end{enumerate}
\vspace{-.77cm}
\end{proof}

\begin{corollary}  If $f \neq 0$ is a multihomogeneous polynomial such that $\mbox{deg}_y(f) = 3$ and $\mbox{deg}(f) = n$, then $f \in L^{\delta}$ if and only if $f $ belongs to the subalgebra generated by $x$, $[y, x]$ and 
\[
[y, \underbrace{x, \ldots, x}_k, [y, \underbrace{x, \ldots, x}_{n-3-k}, y]] - [y,\underbrace{x, \ldots, x}_{k-1}, y, [y, \underbrace{x, \ldots, x}_{n-2-k}]]
\]
where $k$ is an integer and $\frac{n-2}{2}\leq k \leq n-4$.
\end{corollary}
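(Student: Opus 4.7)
The plan is to adapt the direct Hall-basis calculation used in the degree-$6$ and degree-$7$ propositions to arbitrary $n$, keeping $\mbox{deg}_y(f)=3$ fixed.

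For the backward direction I would verify directly that each
\[
g_k := [y, \underbrace{x, \ldots, x}_k, [y, \underbrace{x, \ldots, x}_{n-3-k}, y]] - [y, \underbrace{x, \ldots, x}_{k-1}, y, [y, \underbrace{x, \ldots, x}_{n-2-k}]]
\]
lies in $L^\delta$. The ingredients are $\delta[y, \underbrace{x,\ldots,x}_j]=0$ (since $[x,x,\ldots,x]=0$) and $\delta[y, \underbrace{x,\ldots,x}_j, y] = [y, \underbrace{x,\ldots,x}_{j+1}]$, both immediate from Leibniz. Applying them to each summand of $g_k$ yields
\[
\delta\bigl([y, \underbrace{x,\ldots,x}_k, [y, \underbrace{x,\ldots,x}_{n-3-k}, y]]\bigr) = [y, \underbrace{x,\ldots,x}_k, [y, \underbrace{x,\ldots,x}_{n-2-k}]],
\]
and the same expression for $\delta\bigl([y, \underbrace{x,\ldots,x}_{k-1}, y, [y, \underbrace{x,\ldots,x}_{n-2-k}]]\bigr)$, so the two cancel. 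Since $x$ and $[y,x]$ are trivially constants, the entire subalgebra in the statement sits in $L^\delta$.

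For the forward direction I would write an arbitrary multihomogeneous $f\in L^\delta$ of bidegree $(3,n-3)$ as a linear combination of the Hall basis of the component $L_{(3,n-3)}$, apply $\delta$, and solve the resulting linear system in the Hall basis of $L_{(2,n-2)}$ (which is under complete control by Corollary \ref{previouscorollary}). The Hall basis of $L_{(3,n-3)}$ splits into three natural families of basic brackets $[A,B]$ with $A>B$: the left-normed type $[y, \underbrace{x,\ldots,x}_{a_1}, y, \underbrace{x,\ldots,x}_{a_2}, y, \underbrace{x,\ldots,x}_{a_3}]$ (placed in Hall form via iteration of Lemma \ref{previouslemma}), the simply-nested type $[y, \underbrace{x,\ldots,x}_a, y, [y, \underbrace{x,\ldots,x}_j]]$, and the doubly-nested type $[[y, \underbrace{x,\ldots,x}_i, y], [y, \underbrace{x,\ldots,x}_j]]$ with $i\leq j$. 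Basic words that already belong to the subalgebra $\langle x, [y,x]\rangle$, such as $[y,x,x,[y,x],[y,x]]$ in the degree-$7$ case, are absorbed into the $\langle x, [y,x]\rangle$ contribution of the claim, while the remaining constants modulo that part are spanned by the $g_k$'s.

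The main obstacle is the purely combinatorial bookkeeping: writing $\delta(v)$ in the Hall basis of $L_{(2,n-2)}$ for each basic $v\in L_{(3,n-3)}$ and producing a uniform rank-nullity count matching the claimed number $\lfloor(n-4)/2\rfloor$ of independent $g_k$'s plus the $\langle x, [y,x]\rangle$ contribution. Independence of the $g_k$'s follows by pinpointing, for each admissible $k$, a distinguished Hall monomial featuring $[y, \underbrace{x,\ldots,x}_{n-3-k}, y]$ of the correct length that is activated by $g_k$ and by no other $g_{k'}$. A cleaner but less explicit alternative, in the spirit of the invariant-theoretic second proof announced in the paper, would endow $L$ with the $\mathfrak{sl}_2$-action making $\delta$ the lowering operator and count lowest weight vectors in the character of $L_{(3,n-3)}$, matching the total with the $g_k$'s and the subalgebra contribution.
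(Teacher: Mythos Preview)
Your outline takes essentially the same route as the paper: list the Hall basis of the bidegree-$(n-3,3)$ component, apply $\delta$, and solve the resulting linear system, noting that the case is analogous to the $\deg_y(f)=3$ subcase of the degree-$7$ proposition. The paper's own proof is no more than this sketch---it enumerates the four Hall families
\[
[y,x^{n-3},y,y],\quad f_k=[y,x^k,y,[y,x^{n-3-k}]],\quad f_{l,m}=[y,x^{n-3-l-m},[y,x^l],[y,x^m]],\quad f_p=[y,x^p,[y,x^{n-3-p},y]]
\]
and then simply states that the argument is ``analogous to the previous proposition.'' Your backward-direction verification that $\delta(g_k)=0$ is a detail the paper omits entirely, so in that respect you are slightly more explicit. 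One small point: your three ``natural families'' do not explicitly name the $f_{l,m}$ type (triple products of $\deg_y=1$ factors), though you correctly identify their role later as the basic words already lying in $\langle x,[y,x]\rangle$; making that family explicit from the start would bring your enumeration in line with the paper's.
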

\begin{proof} Suppose $f \neq 0$ a multihomogeneous polynomial such that $\mbox{deg}(f) = n$. It is sufficient to show when $\mbox{deg}_y(f) > 7$. We write $f$ as a linear combination of the Hall's basis, that is, as a linear combination of polynomials 
\begin{align*}
&\quad \ [y, \underbrace{x, \ldots, x}_{n-3}, y, y],\\
f_{k} &= [y, \underbrace{x, \ldots, x}_{k}, y, [y, \underbrace{x, \ldots, x}_{n-3-k}]],\\
f_{l. m} &= [y, \underbrace{x, \ldots, x}_{n-3-l-m}, [y, \underbrace{x, \ldots, x}_{l}], [y, \underbrace{x, \ldots, x}_{m}]],\\ 
f_p &= [y, \underbrace{x, \ldots, x}_{p}, [y,  \underbrace{x, \ldots, x}_{n-3 - p}, y]]
\end{align*}
for all $k$, $l$, $m$, $p$ integers $\geq 1$ such that $\frac{n-4}{2}\leq k \leq n-4$, $\frac{n-2}{2}\leq p \leq n-4$, $l \leq m < \frac{n-3}{2}$ and $2l + m < n-3$. The proof is analogous to the previous proposition when $\mbox{deg}_y(f) = 3$.
\end{proof}

\subsection{Proof using invariant theory}
We start recalling the next useful result by Drensky and Gupta (see Theorem 4.6 of \cite{DG}).

\begin{theorem}
For any $GL_2(K)$-invariant ideal $I$ of $K\langle x,y \rangle$ the algebra of constants
$(K\langle x, y\rangle/I )^\delta$ is spanned by the highest weight vectors of the $GL_2(K)$-irreducible components
of $K\langle x, y\rangle/I$.
\end{theorem}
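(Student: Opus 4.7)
The plan is to exploit complete reducibility of finite-dimensional polynomial representations of $GL_2(K)$ in characteristic zero, together with the observation that $\delta$ is the infinitesimal generator of the one-parameter subgroup $UT_2(K)\subset GL_2(K)$. First I would equip $K\langle x,y\rangle$ with its natural $GL_2(K)$-module structure: the homogeneous component $W_n$ of total degree $n$ is the $n$-fold tensor power of the standard representation $V=Kx\oplus Ky$, so $K\langle x,y\rangle=\bigoplus_{n\geq 0}W_n$ is a graded polynomial $GL_2(K)$-module. The hypothesis that $I$ is $GL_2(K)$-invariant implies that each $I_n:=I\cap W_n$ is a submodule, whence $K\langle x,y\rangle/I$ inherits a graded polynomial $GL_2(K)$-module structure.

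Next, by Weyl's complete reducibility theorem (applicable because $GL_2(K)$ is reductive and $\mathrm{char}\,K=0$), each finite-dimensional quotient $W_n/I_n$ splits as a finite direct sum of irreducible polynomial $GL_2(K)$-modules. In every such irreducible $U$, the space of highest weight vectors is precisely $\ker(E|_U)$, where $E\in\mathfrak{gl}_2(K)$ is the raising operator determined by $E(y)=x$, $E(x)=0$; by the classification of irreducible polynomial representations of $GL_2(K)$ (essentially $S^a(V)\otimes(\det)^b$), this kernel is one-dimensional. The decisive identification is that the derivation $\delta$ coincides with the operator by which $E$ acts on the tensor algebra: since $\delta$ agrees with $E$ on the generators and both are extended to $K\langle x,y\rangle$ by the Leibniz rule, they define the same endomorphism. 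Consequently $\exp(c\delta)$ implements the action of $\exp(cE)=\bigl(\begin{smallmatrix}1 & c\\ 0 & 1\end{smallmatrix}\bigr)\in UT_2(K)$, and therefore $(K\langle x,y\rangle/I)^\delta=(K\langle x,y\rangle/I)^{UT_2(K)}$.

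Assembling these pieces, the kernel of $\delta$ on $K\langle x,y\rangle/I$ decomposes along the irreducible summands into a direct sum of one-dimensional highest weight lines, which is exactly the span of all highest weight vectors of the $GL_2(K)$-irreducible components, as claimed. The only delicate point—more a verification than an obstacle—is the compatibility between the Lie-algebraic extension of $E$ to a derivation and the group-theoretic exponential, i.e. that the kernel of $\delta$ on each irreducible coincides with its $UT_2(K)$-fixed subspace and with its highest weight line. Once this standard link between a representation and its differential is spelled out, everything else reduces to complete reducibility and the one-dimensionality of the highest weight space in each irreducible polynomial $GL_2(K)$-module.
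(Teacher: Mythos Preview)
Your argument is correct. However, the paper does not actually prove this theorem: it merely \emph{quotes} it as Theorem~4.6 of Drensky and Gupta \cite{DG} and then uses it to organize the degree-by-degree computation of $L^\delta$. So there is no proof in the paper to compare yours against.

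That said, your approach is exactly the standard one underlying the cited result: complete reducibility of polynomial $GL_2(K)$-modules in characteristic zero, plus the identification of $\delta$ with the nilpotent raising operator $E\in\mathfrak{gl}_2(K)$ acting on tensor powers by the Leibniz rule. One small remark: once you have identified $\delta$ with $E$, the detour through $\exp(c\delta)$ and $UT_2(K)$-invariants is unnecessary; the conclusion that $\ker\delta$ on each irreducible summand is the one-dimensional highest weight line follows directly from the $\mathfrak{sl}_2$-structure of the irreducible, and summing over components gives the result.
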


Keeping in mind the previous result, let $L$ be the free Lie algebra of rank two generated by $x,y$. If $L$ is a direct sum of irreducible $GL_2(K)$-modules $W(\lambda_1,\lambda_2)$, then in each 
summand $W(\lambda_1,\lambda_2)$ there is a homogeneous polynomial of degree $\lambda_1$ in $x$ and $\lambda_2$ in $y$ and these polynomials form a basis of $L^\delta$. If a constant is of degree $\lambda_1$ in $x$ and $\lambda_2$ in $y$, then it belongs to $W(\lambda_1,\lambda_2)$. In particular, we get the homogeneous constants $f$ in $x$ and $y$ do satisfy the 
inequality $\deg_x(f)\geq \deg_y(f)$. Now, if we know the multiplicity $m{(\lambda_1,\lambda_2)}$ of the $GL_2(K)-$module $W(\lambda_1,\lambda_2)$ in $L$, then we only need to find $m{(\lambda_1,\lambda_2)}$ linearly independent homogeneous constants of degree $\lambda_1$ in $x$ and $\lambda_2$ in $y$. On this purpose, it is crucial the knowledge of the decomposition of the homogeneous 
components of the free Lie algebra of any finite rank that is well known for degrees up to 10 (see \cite{Thrall}). In particular, if $L_n$ denotes the homogeneous component of degree $n$ of $L$, we have:
\[L_1=W(1),\ L_2=W(1,1),\ L_3=W(2,1),\ L_4=W(3,1),\ L_5=W(4,1)+W(3,2),\]\[L_6=W(5,1)+W(4,2)+W(3,3),\ L_7=W(6,1)+2W(5,2)+2W(4,3).\]
Now remark $L'/L''=\oplus_{n\geq 2}W(n-1,1)$ and therein the constants are monomials of type $[y,x,\ldots,x]$. For the other modules, the following polynomials form bases of the homogeneous 
components $L^{\delta}(\lambda_1,\lambda_2)$ with obvious meaning of the notation:
\[
(n - 2,2): [[y,x,…,x],[y,x,…,x]],
\]
\[
(3,3): [[y,x,y],[y,x,x]],
\] 
\[
(4,3): [[y,x,x],[y,x],[y,x]], [[y,x,x,y],[y,x,x]] - [[y,x,x,x],[y,x,y]].
\]
Observe the modules $W(n-2,2)$ belong to $L''$, then the constants can only be polynomials of type
$[[y,x,\ldots,x],[y,x,\ldots,x]]$ which belongs to the subalgebra $L_{\leq 2}$ of $L$ generated by $x,[y,x]$. Notice that the polynomial $[[y,x,y],[y,x,x]]$ does not belong to $L_{\leq 2}$ whereas it is easy to see that 
\[[[y,x,x,y],[y,x,x]] - [[y,x,x,x],[y,x,y]] = [[[y,x,y],[y,x,x]],x].\]
 
 We have proved the following.

\begin{theorem}
The constants of $L$ of degree less than or equal to 7 belong to the Lie subalgebra of $L$ generated by \[x,[y,x],[[y,x,y],[y,x,x]].\] 
\end{theorem}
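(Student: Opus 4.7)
The plan is to use the Drensky--Gupta theorem cited just above the statement to reduce the problem to a representation-theoretic bookkeeping exercise: the constants of bidegree $(\lambda_1,\lambda_2)$ in $L^\delta$ span a space of dimension exactly $m(\lambda_1,\lambda_2)$, the multiplicity of $W(\lambda_1,\lambda_2)$ in the $GL_2(K)$-decomposition of $L$. Therefore, to prove the theorem it suffices to exhibit, for every irreducible summand $W(\lambda_1,\lambda_2)$ occurring in $L_n$ with $n \leq 7$, that many linearly independent elements of the correct bidegree which lie in the Lie subalgebra $S$ generated by $x$, $[y,x]$ and $[[y,x,y],[y,x,x]]$.

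First I would dispatch the easy summands uniformly. The summand $W(n-1,1)$, present in each $L_n$, contributes a one-dimensional space of constants spanned (up to scalar) by the left-normed word $[y,x,\ldots,x]$, which is an iterated bracket of $[y,x]\in S$ with $x\in S$, hence lies in $S$. Each summand $W(n-2,2)$ sits inside $L''$ and, as indicated in the excerpt, its constants are brackets of two elements of the form $[y,x,\ldots,x]$, so they lie in the subalgebra generated by $x$ and $[y,x]$, which is contained in $S$. These two observations already settle every component of $L_n$ for $n \leq 5$ and all but one component of $L_6$.

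For the remaining cases I would argue component by component using the explicit bases $L^{\delta}(\lambda_1,\lambda_2)$ listed in the excerpt. For $L_6$ the module $W(3,3)$ has one constant up to scalar, namely $[[y,x,y],[y,x,x]]$, which is by definition one of the three generators of $S$. For $L_7$ the only summand needing genuine attention is $2W(4,3)$; one basis constant $[[y,x,x],[y,x],[y,x]]$ is a bracket of elements already in $S$, and the other, $[[y,x,x,y],[y,x,x]] - [[y,x,x,x],[y,x,y]]$, rewrites as $[[[y,x,y],[y,x,x]],x]$, so it is in $S$ as well. The multiplicities $2W(5,2)$ in $L_7$ are handled exactly as the generic $W(n-2,2)$ case above.

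The main obstacle I foresee is verifying the key rewriting identity
\[
[[y,x,x,y],[y,x,x]] - [[y,x,x,x],[y,x,y]] = [[[y,x,y],[y,x,x]],x],
\]
which is the only genuinely non-formal computation and rests on the Jacobi identity applied to the bracket of $[y,x,y]\diamond[y,x,x]$ against $x$ (using identity (\ref{identidade_importante})). Beyond that, one must also confirm linear independence of the exhibited candidates in each bidegree so as to match $m(\lambda_1,\lambda_2)$ exactly; this follows since the candidates, after expansion in the Hall basis, have distinct leading terms in the deg-lex order. With those ingredients in hand the proof concludes just by summing the contributions over all bidegrees $(\lambda_1,\lambda_2)$ with $\lambda_1+\lambda_2 \leq 7$.
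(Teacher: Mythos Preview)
Your proposal is correct and follows essentially the same route as the paper's own invariant-theoretic proof: invoke the Drensky--Gupta theorem, read off the multiplicities $m(\lambda_1,\lambda_2)$ from the known $GL_2(K)$-decomposition of $L_n$ for $n\le 7$, and then exhibit for each bidegree the right number of constants inside $S$, the nontrivial step being the Jacobi rewriting $[[y,x,x,y],[y,x,x]]-[[y,x,x,x],[y,x,y]]=[[[y,x,y],[y,x,x]],x]$. If anything, you are slightly more explicit than the paper in noting that the two independent constants for $2W(5,2)$ are $[[y,x,x,x,x],[y,x]]$ and $[[y,x,x,x],[y,x,x]]$ and in flagging the linear-independence check.
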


Indeed, if $A:=[x,y,y]$, $B:=[x,y,x]$, then $\delta(A)=B$ and 
\[
M=[A, B]=-2 U_{A, A}^{(1,0)} \in U^{(1)}.
\]
Because of the previous experimental results, we can state the next conjecture.

\begin{conjecture}
Let $M$ be a Hall monomial so that it is a constant under the derivation $\delta$, then $M\in U^{(1)}$. 
\end{conjecture}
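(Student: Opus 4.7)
The plan is to strengthen Theorem \ref{monomial}, which places every Hall-monomial constant $M = [A, B]$ in $U^{(k)}$ with $k = \min\{r, s\}$, to the sharper statement $M \in U^{(1)}$. Theorem \ref{monomial} already handles the subcase in which at least one of $A$, $B$ is a constant; the remaining case is when neither $A$ nor $B$ is a constant, so $r, s \geq 2$. There Theorem \ref{monomial} produces the key relation
\[
C := \delta^{r-1}(A) = \alpha\,\delta^{s-1}(B) \neq 0,
\]
with $C$ itself a nonzero constant. I would proceed by strong induction on $\deg(M)$, with the base cases $\deg(M) \leq 7$ furnished by the explicit verifications of Section 5.

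For the inductive step, the approach is to combine the invariant-theoretic viewpoint of Section 5.2 with Proposition \ref{lemma3.4}. By the $GL_2(K)$-decomposition and Thrall's formula for $L$, the subspace of constants of bidegree $(\lambda_1, \lambda_2)$ has dimension exactly $m(\lambda_1, \lambda_2)$, the multiplicity of the irreducible module $W(\lambda_1, \lambda_2)$. It therefore suffices to exhibit, in each bidegree, $m(\lambda_1, \lambda_2)$ linearly independent $U^{(1)}$-constants. The natural candidates are: (i) iterated brackets of $x$ with lower-degree $U^{(1)}$-constants (available by induction); (ii) those pseudodeterminants $U^{(1, 0)}_{P, Q}$ that happen to be constants; and (iii) the constants $[f, g]$ with $\delta^{r'-1}(f) = \alpha' g$ forming the defining generators of $U^{(1)}$. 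Given the key relation above, one then tries to realize $M$ (or a scalar multiple of it) as such a combination, using Jacobi manipulations to match coefficients, and invoking the induction hypothesis whenever a lower-degree constant factor appears.

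The main obstacle I expect lies precisely in this final spanning step. The candidate generators of $U^{(1)}$ are linked by numerous relations: the Jacobi identity, the Leibniz-type formula of Remark \ref{good}, and the nontrivial requirement that the generators of type (ii) actually be constants (which, via $\delta(U^{(k, 0)}_{A, B}) = U^{(k, 1)}_{A, B} + U^{(k+1, 0)}_{A, B}$, constrains the Hall pair $(A, B)$). A careful Hilbert-series analysis, comparing the generating function of candidate $U^{(1)}$-constants against $\sum m(\lambda_1, \lambda_2)\, t_1^{\lambda_1} t_2^{\lambda_2}$, will be required to rule out any bidegree gap; such a gap would in principle disprove the conjecture. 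The explicit computations up to degree $7$ and the observation that every Jacobi reduction encountered so far remains inside $U^{(1)}$ provide evidence that no such gap occurs, but pinpointing the exact combinatorial structure is the delicate part.
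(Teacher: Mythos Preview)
The statement you are attempting to prove is presented in the paper as an open \emph{conjecture}, not a theorem; the paper offers no proof of it. The authors explicitly label it as such, motivated only by the experimental computations of Section~5 (degrees $\leq 7$) and by Theorem~\ref{monomial}, which places a Hall-monomial constant in $U^{(k)}$ for some $k$ depending on the monomial but does not force $k=1$. There is therefore no paper proof to compare your proposal against.

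As for the proposal itself: you have correctly isolated the crux as the spanning step in each bidegree, and you are candid that you do not know how to carry it out. What you outline is a strategy---exhibit $m(\lambda_1,\lambda_2)$ linearly independent $U^{(1)}$-constants in bidegree $(\lambda_1,\lambda_2)$ using brackets with $x$ of lower-degree constants, constant pseudodeterminants, and the defining generators of $U^{(1)}$---but no mechanism is supplied for producing these elements or certifying their independence beyond degree~$7$. The Hilbert-series comparison you invoke would indeed settle matters, yet computing the Hilbert series of $U^{(1)}$ is exactly the open problem: $U^{(1)}$ is defined by generators that are themselves only implicitly characterized (those $[f,g]$ with $\delta^{r-1}(f)=\alpha\,\delta^{k-1}(g)$ that happen to be constants), and no closed description of its graded dimensions is available. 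Observe also that your strategy, if it succeeded, would actually establish the stronger companion conjecture $L^\delta=U^{(1)}$, not merely the statement about Hall monomials. In short, the proposal is a reasonable sketch of what a proof might look like, but it does not close the gap that makes the statement a conjecture in the first place.
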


We can also state a stronger conjecture supported by the results of this section.

\begin{conjecture}
The algebra $L^\delta$ of constants of $\delta$ in the free Lie algebra $L$ of rank $2$ is the algebra $U^{(1)}$.
\end{conjecture}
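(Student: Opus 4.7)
The strategy I would follow is a systematic extension of the invariant-theoretic proof already used in the paper for constants of degree $\leq 7$. By Drensky--Gupta, the constants of $L$ are precisely the highest weight vectors in the $GL_2(K)$-decomposition $L=\bigoplus_{\lambda_1\geq\lambda_2}W(\lambda_1,\lambda_2)^{\oplus m(\lambda_1,\lambda_2)}$, where the multiplicities $m(\lambda_1,\lambda_2)$ are computable from Witt's formula (or equivalently Thrall's character formula). Hence it suffices to show that for every bidegree $(\lambda_1,\lambda_2)$ with $\lambda_1\geq\lambda_2$, the subspace of $U^{(1)}$ in this bidegree has dimension at least $m(\lambda_1,\lambda_2)$, since the opposite inclusion $U^{(1)}\subseteq L^\delta$ is automatic from the definition.

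For the construction step, the natural candidates are the pseudodeterminants $U^{(k,0)}_{A,B}$, whose bidegree is $(\mbox{deg}_x(A)+\mbox{deg}_x(B)+k,\,\mbox{deg}_y(A)+\mbox{deg}_y(B)-k)$. Varying over Hall monomials $A,B$ and $k\geq 1$, together with iterated Lie brackets among these elements and with $x$, yields many members of $U^{(1)}$ in any prescribed bidegree. The proof proper would proceed by induction on total degree $n$: for each $(\lambda_1,\lambda_2)$ with $\lambda_1+\lambda_2=n$, one exhibits $m(\lambda_1,\lambda_2)$ linearly independent elements of the above form, after reducing modulo the span of constants of lower degree (which by induction already lie in $U^{(1)}$). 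The base of the induction and the cases $n\leq 7$ are handled by the computations of Section 5.

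The main obstacle I anticipate is simultaneous linear independence and \emph{exhaustiveness} in each bidegree. The remark following Theorem \ref{monomial} already signals the difficulty: a constant $M=[A,B]$ whose factors are not individually constants satisfies $U_{A,B}^{(r-1,s-1)}=0$, so $M$ is merely "close to" being a pseudodeterminant in the absence of an integration theory for $\delta$. A reasonable intermediate target is the weaker conjecture that every Hall monomial constant belongs to $U^{(1)}$, provable by a combinatorial refinement of Theorem \ref{monomial} (carefully extracting the type (a) generators $[f,g]$ with $\delta^{r-1}(f)=\alpha g$ that are needed to "invert" $\delta$); the highest-weight argument should then upgrade it to arbitrary constants. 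The most delicate benchmark will be the irreducibles $W(\lambda_1,\lambda_2)$ of multiplicity $\geq 2$ (first appearing as $2W(5,2)$ and $2W(4,3)$ inside $L_7$, with multiplicities growing quickly thereafter), where one must produce structurally distinct pseudodeterminants whose highest weight vectors remain linearly independent modulo lower-degree relations.
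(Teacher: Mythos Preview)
The statement you are attempting to prove is not a theorem in the paper but an open \emph{conjecture}; the paper offers no proof of it, only the experimental evidence of Section~5 (degrees $\leq 7$) as support. There is therefore no paper proof to compare your proposal against.

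Your proposal is itself not a proof but a strategy outline, and you are candid about this: you explicitly flag the ``main obstacle'' of exhaustiveness in each bidegree and echo the paper's own remark that, absent an integration theory for $\delta$, a constant $[A,B]$ with non-constant factors is only ``close to'' a pseudodeterminant. This is precisely the gap that keeps the statement a conjecture. The reduction via Drensky--Gupta to highest weight vectors and the dimension-counting scheme against the multiplicities $m(\lambda_1,\lambda_2)$ are sound organizing principles, and they are exactly what the paper uses in Section~5.2 for small degrees; but the inductive step you describe---``exhibit $m(\lambda_1,\lambda_2)$ linearly independent elements of the above form''---is the entire content of the conjecture and you have not supplied a mechanism for carrying it out in general. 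In particular, the definition of $U^{(1)}$ already includes, as generators, all constants of the form $[f,g]$ with $\delta^{r-1}(f)=\alpha\delta^{k-1}(g)$, so your proposed ``intermediate target'' (the weaker conjecture on Hall-monomial constants) is essentially Theorem~4.3 itself and does not advance toward the full statement. What is genuinely missing, both from the paper and from your outline, is a structural argument that every highest weight vector in $L$ can be reached by iterated brackets of $x$, the $U^{(k,0)}_{A,B}$, and the special $[f,g]$ generators---and no such argument is currently known.
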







\begin{thebibliography}{99}

\bibitem{Bah} Y. A. Bahturin. {\it Identical relations in Lie algebras.} VNU Science Press, Utrecht, 1987.
Translated from the Russian by Bahturin.

\bibitem{Bed} L. Bedratyuk. A note about the Nowicki conjecture on Weitzenb\"ock derivations. {\it Serdica
Math. J.}, 35:311–316, 2009.

\bibitem{bll1} D. Blessenohl and H. Laue. A basis construction for free Lie algebras. {\it Exp. Math.}, 11:145–152, 1993.

\bibitem{bok2} L. A. Bokut. A basis for free polynilpotent Lie algebras. {\it Algebra Logika}, 3:13–19, 1963.

\bibitem{bok1} L. A. Bokut. Unsolvability of the equality problem and subalgebras of finitely presented Lie algebras. {\it Math. USSR Izv.}, 6:1153–1199, 1972.

\bibitem{bks1} R. M. Bryant, L. G. Kovacs, and R. St\"ohr. Invariant bases for free Lie rings. {\it Quart. J. Math.}, 53:1–17, 2002.

\bibitem{cedufi} L. Centrone, A. Dushimirimana, and {\c S}. F{\i}nd{\i}k. On Nowicki’s conjecture: a survey and a
new result. {\it Turkish J. Math.}, 46(5):1709–1734, 2022.

\bibitem{CF} L. Centrone and {\c S}. F{\i}nd{\i}k. The Nowicki conjecture for relatively free algebras. {\it J. Algebra}, 552:68–85, 2020.

\bibitem{chi1} E. S. Chibrikov. A right normed basis for free Lie algebras and Lyndon-Shirshov words. {\it J.
Algebra}, 302(2):593–612, 2006.

\bibitem{DDF} R. Dangovski, V. Drensky, and {\c S}. F{\i}nd{\i}k. Weitzenb\"ock derivations of free metabelian Lie
algebras. {\it Linear Algebra Appl.}, 439(10):3279–3296, 2013.

\bibitem{DDF1} R. Dangovski, V. Drensky, and {\c S}. F{\i}nd{\i}k. Weitzenböck derivations of free associative algebras. {\it J. Algebra Appl.}, 16(3):1750041, 2017.

\bibitem{DK} H. Derksen and G. Kemper. {\it Computational Invariant Theory}, volume 130 of {\it Encyclopaedia
of Mathematical Sciences}. Springer-Verlag, Berlin, 2002.

\bibitem{D1} V. Drensky. Invariants of unipotent transformations acting on noetherian relatively free
algebras. {\it Serdica Math. J.}, 30:395–404, 2004. 

\bibitem{D2} V. Drensky. Generalized Nowicki conjecture. {\it Proc. Amer. Math. Soc.}, 148(9):3705–3711, 2020.

\bibitem{DF1} V. Drensky and {\c S}. F{\i}nd{\i}k. The Nowicki conjecture for free metabelian Lie algebras. {\it J. Algebra Appl.}, 19(5):2050095, 2020.

\bibitem{DG} V. Drensky and C. K. Gupta. Constants of Weitzenb\"ock derivations and invariants of
unipotent transformations acting on relatively free algebras. {\it J. Algebra}, 292(2):393–428,
2005.

\bibitem{DML} V. Drensky and L. Makar-Limanov. The conjecture of Nowicki on Weitzenb\"ock derivations
of polynomial algebras. {\it J. Algebra Appl.}, 8:41–51, 2009.

\bibitem{gus1} S. Guilfoyle and R. St\"ohr. Invariant bases for free Lie algebras. {\it J. Algebra}, 204:337–346,
1998.

\bibitem{hal1} M. Hall. A basis for free Lie rings and higher commutators in free groups. {\it Proc. Amer.
Math. Soc.}, 1:575–581, 1950.

\bibitem{K1} J. Khoury. {\it Locally Nilpotent Derivations and Their Rings of Constants}. PhD thesis, University of Ottawa, 2004.

\bibitem{K2} J. Khoury. A Gr\"obner basis approach to solve a conjecture of Nowicki. {\it J. Symbolic Comput.}, 43:908–922, 2008.

\bibitem{Kuroda} S. Kuroda. A simple proof of Nowicki’s conjecture on the kernel of an elementary derivation. {\it Tokyo J. Math.}, 32:247–251, 2009.

\bibitem{lyn1} R. C. Lyndon. On Burnsides problem I. {\it Trans. Amer. Math. Soc.}, 77:202–215, 1954.

\bibitem{Ngt} M. Nagata. On the fourteenth problem of Hilbert. In {\it Proc. Int. Congr. Math. (Edinburgh 1958)}, pages 459–462. Cambridge University Press, Cambridge, 1960.

\bibitem{N} A. Nowicki. Polynomial derivations and their rings of constants. Uniwersytet Miko\l aja Kopernika, Toruń, 1994. front matter+iii+170 pp.

\bibitem{reu1} C. Reutenauer. Dimensions and characters of the derived series of the free Lie algebra. In
{\it Mots. M\'elanges Offerts \`a M.-P. Sch\"utzenberger}, pages 171–184. Herm\`es, Paris, 1990.

\bibitem{reuten} C. Reutenauer. {\it Free Lie algebras}, volume 7 of {\it London Mathematical Society Monographs. New Series}. The Clarendon Press, Oxford University Press, New York, 1993. Oxford Science Publications.

\bibitem{shi4} A. I. Shirshov. Subalgebras of free Lie algebras. {\it Mat. Sb.}, 33:441–452, 1953.

\bibitem{shi2} A. I. Shirshov. On free Lie rings. {\it Mat. Sb.}, 45(2):113–122, 1958.

\bibitem{shi3} A. I. Shirshov. Some algorithmic problems for Lie algebras. {\it Sibirsk. Mat. Z.}, 3:292–296, 1962. Transl.: ACM SIGSAM Bull. 33(2) (1999), 3–6.

\bibitem{St} B. Sturmfels. {\it Algorithms in Invariant Theory}. Texts and Monographs in Symbolic Computation. Springer-Verlag, Wien, 2 edition, 2008.

\bibitem{Thrall} R. M. Thrall. On symmetrized Kronecker powers and the structure of the free Lie ring. {\it Amer. J. Math.}, 64:371–388, 1942.

\bibitem{W} R. Weitzenb\"ock. \"Uber die invarianten von linearen gruppen. {\it Acta Math.}, 58:231–293, 1932.

\bibitem{wit1} E. Witt. Die unterringe der freien liescher ringe. {\it Math. Z.}, 64:195–216, 1956.

\bibitem{zel1} E. I. Zelmanov. On Engel Lie algebras. {\it Sib. Math. J.}, 29: 777-781, 1988.
\end{thebibliography}

\EditInfo{January 8, 2025}{May 14, 2025}{David Towers and Ivan Kaygorodov}

\end{document}